\documentclass[11pt, reqno, a4paper]{amsart} 
\usepackage[english]{babel}
\usepackage{amsfonts, amsmath, amsthm, amssymb,amscd,indentfirst}
\usepackage{times}
\usepackage{color}
\usepackage{enumerate}
\usepackage{anysize} 
\usepackage{mathrsfs}
\marginsize{1.8cm}{1.8cm}{1.5cm}{1.5cm}

\newtheorem{theorem}{Theorem}[section]
\newtheorem{proposition}{Proposition}[section]
\newtheorem{lemma}{Lemma}[section]
\newtheorem{definition}{Definition}[section]

\newtheorem{ass}{Assumption}

\newcommand{\einf}{\operatorname{ess} \operatorname{inf}}

\newcommand{\dx}{\textnormal{d}x}
\newcommand{\ds}{\textnormal{d}\sigma}

\newcommand{\spt}{\operatorname{supp}}

\newcommand{\ma}{\mathcal{M}}
\newcommand{\wma}{\mathcal{W}}

\title{Some eigenvalue problems involving the $(p(\cdot),q(\cdot))$-Laplacian}
\author{Juan Alcon Apaza}

\address{Universidade Federal Fluminense, Instituto de Matemática, Campus do Gragoatá, Rua Prof. Marcos Waldemar de Freitas, s/n, bloco H, Niterói, RJ 24210-201, Brazil }
\email{jpablo@id.uff.br}

\begin{document}
\maketitle

\begin{abstract}
In this work, we are concerned with a Robin and Neumann problem with $(p(\cdot), q(\cdot))$-Laplacian. Under some appropriate conditions on the data involved in the elliptic problem, we prove the existence of  solutions applying two versions of Mountain Pass theorem, Ekeland's variational principle and Lagrange multiplier rule. 
\end{abstract}

\let\thefootnote\relax\footnote{2020 \textit{Mathematics Subject Classification}. 35J65; 35J92; 35J60; 35J66; 47J30}
\let\thefootnote\relax\footnote{\textit{Keywords and phrases}. $(p(\cdot) , q(\cdot) )$-Laplacian; Robin problem; Neumann problem; variational methods; critical point theorems}
\section{introduction}

The aim of this paper is to investigate the eigenvalue problem

\begin{equation}\label{1}
\left\{
\begin{aligned}
-\Delta _p u - \Delta_q u &= \lambda \alpha \vert    u\vert    ^{r-2} & &  \text { in } \Omega,\\
 \left( \vert    \nabla u\vert    ^{p-2}  + \vert    \nabla u \vert    ^{q-2}\right)\frac{\partial u}{\partial \nu} + \beta _1 \vert    u\vert    ^{p-2} + \beta _2 \vert    u\vert    ^{q-2}&=0 & & \text { on } \partial \Omega,
\end{aligned}
\right.
\end{equation}
where $\Omega \subset \mathbb{R} ^n$, $n\geqslant 2$, is a bounded smooth domain, and $\frac{\partial u}{\partial \nu}$ is the outer unit normal derivative on $\partial \Omega$. We study \eqref{1} in  distinct situations, and the solutions $u$ will be sought in the variable exponent Sobolev space $\wma := W^{1,\ma (\cdot)} (\Omega)$, where $\ma = \max \{p,q\}$.

The main results of this work are the following theorems.

\begin{theorem} \label{52} Assume that $p, q, r \in C_{+}(\bar{\Omega})$, $\alpha \in L^\infty (\Omega)$, $\alpha ^- >0$, and $\beta _1, \beta _2 \in L^\infty (\partial \Omega)$ with $\beta _1 ^-, \beta ^- _2 >0$.
\begin{enumerate}[(A)]
\item \label{3} If $r^+ < \min\{p^- , q^-\}$, then any $\lambda>0$ is an eigenvalue for problem \eqref{1}. Moreover, for any $\lambda>0$ there exists a sequence $\{u_k \}$ of nontrivial weak solutions for problem \eqref{1} such that $u_k \rightarrow 0$ in $\wma$.

\item \label{4} If  $r^- <  \min \{p^- , q^-\}$  and   $r^+ <\left( \ma ^\ast \right)^-$, then there exists $\Lambda >0$ such that any $\lambda \in\left(   0 , \Lambda \right)$ is an eigenvalue for problem \eqref{1}.

\item \label{5} If $\ma^{+}< r^- \leqslant r^+ <(\ma ^\ast) ^-$, then for any $\lambda>0$, the problem \eqref{1} possesses a nontrivial weak solution.
\end{enumerate}

\end{theorem}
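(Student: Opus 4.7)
The plan is to analyze the $C^1$ energy functional
\begin{equation*}
J_\lambda(u) = \int_\Omega\Big(\frac{|\nabla u|^{p(x)}}{p(x)} + \frac{|\nabla u|^{q(x)}}{q(x)}\Big)\dx + \int_{\partial\Omega}\Big(\frac{\beta_1|u|^{p(x)}}{p(x)} + \frac{\beta_2|u|^{q(x)}}{q(x)}\Big)\ds - \lambda\int_\Omega\frac{\alpha|u|^{r(x)}}{r(x)}\dx,
\end{equation*}
on $\wma$, whose critical points are exactly the weak solutions of \eqref{1}. Since the $r$-growth is strictly subcritical in all three regimes, the embedding $\wma\hookrightarrow L^{r(\cdot)}(\Omega)$ is compact; together with the compact trace embeddings into $L^{p(\cdot)}(\partial\Omega)$ and $L^{q(\cdot)}(\partial\Omega)$ this ensures $J_\lambda\in C^1(\wma,\mathbb{R})$ with sequentially weakly continuous lower-order terms. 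The three parts will be handled by three different variational techniques.

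For part (A), $J_\lambda$ is even. Because $r^+<\min\{p^-,q^-\}$, the leading $p,q$-modulars dominate the $r$-term at infinity, so $J_\lambda$ is coercive and bounded below, and $(PS)_c$ holds for every $c$ via the $(S_+)$ property of $-\Delta_{p(\cdot)}-\Delta_{q(\cdot)}$ combined with the compact embeddings. I would then apply Kajikiya's symmetric mountain pass theorem: for each $k\in\mathbb{N}$, pick a $k$-dimensional subspace $V_k\subset C_c^\infty(\Omega)$ and, using $\alpha^->0$ and equivalence of norms on $V_k$, rescale by $t_k>0$ small so that the $r$-term (of order $t^{r^+}$) dominates the $p,q$-terms (of order $t^{p^-},t^{q^-}$), producing a symmetric compact set of genus $\geq k$ on which $\sup J_\lambda<0$. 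Kajikiya's theorem then yields a sequence of critical points $u_k\to 0$ in $\wma$.

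For part (B), $J_\lambda$ need not be globally bounded below, so the aim is a local minimum near zero. Modular--norm estimates yield, for $\|u\|\leq 1$, an inequality of the form $J_\lambda(u)\geq c_1\|u\|^{\ma^+}-\lambda c_2\|u\|^{r^-}$; since $\ma^+>r^-$, for each small $\lambda>0$ there is a radius $\rho=\rho(\lambda)\in(0,1)$ with $J_\lambda\geq a(\lambda)>0$ on $\partial B_\rho$. Conversely, using $r^-<\min\{p^-,q^-\}$ and $\alpha^->0$, one exhibits a nonzero $v\in C_c^\infty(\Omega)$ (for instance supported where $r(x)$ is close to $r^-$) for which the $r$-term dominates $J_\lambda(tv)$ as $t\to 0^+$, so $J_\lambda(tv)<0$ for $t$ small, giving $m:=\inf_{\overline{B_\rho}}J_\lambda<0$. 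A suitable choice of $\Lambda$ makes these scales compatible for every $\lambda\in(0,\Lambda)$. Ekeland's variational principle on $\overline{B_\rho}$ then produces a $(PS)$-sequence at level $m$ whose weak limit, by the $(S_+)$ property and the compact $r$-embedding, is a nontrivial critical point.

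For part (C), the superlinear subcritical hypothesis $\ma^+<r^-\leq r^+<(\ma^*)^-$ gives the classical mountain pass geometry: the modular inequality above, combined with $\ma^+<r^-$, yields $J_\lambda\geq a>0$ on a sufficiently small sphere; and for any nonzero $v\in\wma$ the $r$-modular eventually dominates, so $J_\lambda(tv)\to-\infty$ as $t\to\infty$. An Ambrosetti--Rabinowitz-type identity via $J_\lambda(u)-\tfrac{1}{r^-}\langle J'_\lambda(u),u\rangle$ bounds $(PS)$-sequences; the $(S_+)$ property and the compact $L^{r(\cdot)}$ embedding then upgrade weak to strong convergence, so the Mountain Pass Theorem delivers a nontrivial critical point. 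The main obstacle throughout is the verification of $(PS)$-compactness in the variable-exponent setting: modular and norm do not coincide, and one must simultaneously juggle the interior $p(\cdot)$, $q(\cdot)$ and the boundary trace modulars; in particular, in (A) the absence of any $(AR)$-type condition forces reliance on coercivity, while in (B) the delicate point is the simultaneous calibration of $\Lambda$, $\rho$ and the test function so that the positive barrier and the negative infimum coexist.
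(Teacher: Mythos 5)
Your proposal follows essentially the same route as the paper: coercivity plus Kajikiya's symmetric mountain pass theorem for (A), Ekeland's variational principle on a small closed ball for (B), and the Ambrosetti--Rabinowitz mountain pass theorem for (C), with the $(S_+)$ property of $L_{p,\beta_1}+L_{q,\beta_2}$ together with the compact embedding $\wma \hookrightarrow L^{r(\cdot)}(\Omega)$ supplying Palais--Smale compactness in all three cases. The only cosmetic difference is that in (C) you normalize the Ambrosetti--Rabinowitz quotient by $r^-$ whereas the paper uses $\ma^+ + \varepsilon$ with $\ma^+ + \varepsilon < r^-$; both choices work.
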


\begin{theorem}\label{53}Assume that $p\in C_+ (\bar{\Omega})$, $r\equiv q$, $q>2$ is a constant, $\alpha \in L^\infty (\Omega)$, $\alpha ^- >0$, and  $\beta _1\equiv \beta _2\equiv  0 $.

\begin{enumerate}[(A)]
\item \label{28}  Suppose  that $p^+<q$. Then the eigenvalue set of problem \eqref{1} is precisely $\{0\} \cup \left(\inf _{u\in \mathcal{C}_q \backslash \{0\}}\frac{\int _\Omega \vert \nabla u\vert ^q \dx}{\int _\Omega \alpha\vert  u\vert ^q \dx}  , \infty\right)$.

\item \label{43} Suppose that   $q<p^-$. Then the eigenvalue set of problem \eqref{1} is precisely $\{0\} \cup \left(\inf _{u\in \mathcal{C} \backslash \{0\}}\frac{\int _\Omega \vert \nabla u\vert ^q \dx}{\int _\Omega \alpha\vert  u\vert ^q \dx}  , \infty\right)$.

\end{enumerate}

\end{theorem}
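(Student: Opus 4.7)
The plan is to prove each characterization as two separate inclusions. First I would dispatch the trivial eigenvalue $\lambda = 0$: any nonzero constant $u \equiv c$ solves $-\Delta_p u - \Delta_q u = 0$ and satisfies the Neumann condition (since $\beta_1 \equiv \beta_2 \equiv 0$), placing $0$ in the spectrum in both (A) and (B).

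For the necessary direction — ruling out every $\lambda \in (0, \mu]$, where $\mu$ denotes the relevant infimum — I would let $\lambda > 0$ be an eigenvalue with eigenfunction $u$. Testing the weak form against $\varphi \equiv 1$, which is legal thanks to the pure Neumann boundary condition, annihilates the principal part and yields $\int_\Omega \alpha |u|^{q-2} u \, \dx = 0$, placing $u$ in the relevant constraint set. Since $\alpha^- > 0$, no nonzero constant satisfies this relation, so $u$ is nonconstant and $\int_\Omega |\nabla u|^{p(x)} \dx > 0$. Testing with $\varphi = u$ and dividing then gives
\[
\lambda = \frac{\int_\Omega |\nabla u|^{p(x)} \dx + \int_\Omega |\nabla u|^q \dx}{\int_\Omega \alpha |u|^q \dx} > \frac{\int_\Omega |\nabla u|^q \dx}{\int_\Omega \alpha |u|^q \dx} \geqslant \mu,
\]
so $\lambda \notin (0, \mu]$.

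For the sufficient direction I would fix $\lambda > \mu$ and seek a nontrivial critical point of
\[
\Psi_\lambda(u) = \int_\Omega \frac{|\nabla u|^{p(x)}}{p(x)} \dx + \frac{1}{q} \int_\Omega |\nabla u|^q \dx - \frac{\lambda}{q} \int_\Omega \alpha |u|^q \dx.
\]
In case (B), since $p^- > q$, the embedding $W^{1,p(\cdot)}(\Omega) \hookrightarrow L^q(\Omega)$ combined with a Poincaré-type control on $\mathcal{C}$ (valid because $\mathcal{C}$ excludes nonzero constants) makes $\Psi_\lambda$ coercive on $\mathcal{C}$; picking $v_0 \in \mathcal{C}$ with $\int |\nabla v_0|^q < \lambda \int \alpha |v_0|^q$ and noting that $\Psi_\lambda(t v_0) < 0$ for small $t > 0$ (since $t^q \gg t^{p(x)}$ as $t \to 0^+$ when $q < p^-$) shows the infimum is strictly negative, yielding a nontrivial minimizer by the direct method; a Lagrange multiplier computation — testing the multiplier relation with $\varphi \equiv 1$ and using $\int \alpha |u|^{q-2} u = 0$ together with $\alpha^- > 0$ — forces the multiplier to vanish, giving a free weak solution. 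In case (A), $\Psi_\lambda$ is unbounded below on $\mathcal{C}_q$, so I would switch to the Nehari-type manifold $\mathcal{N}_\lambda = \bigl\{u \neq 0 : \int_\Omega |\nabla u|^{p(x)} \dx + \int_\Omega |\nabla u|^q \dx = \lambda \int_\Omega \alpha |u|^q \dx\bigr\}$. A fibering analysis shows that $t \mapsto \Psi_\lambda(tv)$ has a unique positive maximum at some $t^*(v)$ for every $v$ with $q$-Rayleigh quotient below $\lambda$, so $\mathcal{N}_\lambda$ is nonempty, and on $\mathcal{N}_\lambda$ the identity $\Psi_\lambda(u) = \int_\Omega \bigl(\tfrac{1}{p(x)} - \tfrac{1}{q}\bigr) |\nabla u|^{p(x)} \dx > 0$ holds. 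I would minimize $\Psi_\lambda$ on $\mathcal{N}_\lambda$ via Ekeland's principle and the compact Sobolev embedding, then use the transversality $\int_\Omega (p(x) - q)|\nabla u|^{p(x)} \dx < 0$ on $\mathcal{N}_\lambda$ to conclude that the Lagrange multiplier vanishes.

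The hardest part will be in case (A): verifying that the infimum on $\mathcal{N}_\lambda$ is strictly positive and attained by a nontrivial weak limit. This requires separating $\mathcal{N}_\lambda$ from the origin in the $W^{1,q}$-norm by combining the Nehari identity with the variable-exponent norm-modular correspondence, and then handling minimizing sequences whose weak limits may lie strictly inside $\mathcal{N}_\lambda$ by reprojecting along the fiber $t \mapsto tu$ and showing that the infimum is still realized. The appearance of a continuum $\{0\} \cup (\mu, \infty)$ in the spectrum, in contrast to the discrete sequence produced by the pure $q$-Laplacian Neumann problem, is the structural consequence of the inhomogeneity introduced by the $p(\cdot)$-term: the fibering parameter in the sufficiency argument can be tuned to realize any prescribed $\lambda \in (\mu, \infty)$.
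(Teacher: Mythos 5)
Your proposal is correct and follows essentially the same route as the paper: testing with $v=1$ and $v=u$ to exclude $(0,\mu]$, direct minimization of the constrained coercive functional on $\mathcal{C}$ in the case $q<p^-$, Nehari-manifold minimization (with the reprojection $t\mapsto tu$ for weak limits landing strictly inside the constraint) in the case $p^+<q$, and the Lagrange multiplier rule with the tests $v=1$, $v=u$ to kill the multipliers in both cases. The only cosmetic deviations are your mention of Ekeland's principle for the Nehari minimization (the paper uses weak lower semicontinuity directly) and your claim of a \emph{unique} fiber maximum, which is not needed and not obvious for variable $p(\cdot)$ — existence of a Nehari point via the intermediate value theorem suffices.
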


\section{Preliminaries}

We first recall some facts on the variable exponent spaces $L^{p(\cdot)} (\Omega)$ and $W^{1,p(\cdot)} (\Omega)$. For more detail, see \cite{fan2,  fanxi, kova}. Suppose that $\Omega\subset \mathbb{R} ^n$ is a bounded open domain with smooth boundary $\partial \Omega$ and $p\in C_+ (\bar{\Omega})$, where
$$
C_+ (\bar{\Omega}) = \left\{ p\in C(\bar{\Omega}) \: \vert \: \inf _{\bar{\Omega}} p > 1\right\}.
$$
The variable exponent Lebesgue space $L^{p(\cdot)} (\Omega)$ is defined  by
$$
L^{p(\cdot)} (\Omega) = \left\{ u: \Omega \rightarrow \mathbb{R} \: \vert  \:  u \text { is measurable  and }  \int _{\Omega} \vert u\vert ^{p} \dx <  \infty  \right\},
$$ 
with the norm
$$
\Vert u\Vert _{L^{p(\cdot)} (\Omega)} = \inf \left\{ \tau > 0 \: \vert  \:  \int _{\Omega} \left\vert \frac{u}{\tau}\right\vert ^p \dx \leqslant 1 \right\}.
$$
The variable exponent Sobolev space $W^{1,p(\cdot)} (\Omega)$ is defined by
$$
W^{1,p(\cdot)} (\Omega)= \left\{u\in L^{p(\cdot)} (\Omega) \: \vert  \: \vert \nabla u \vert \in L^{p(\cdot)} (\Omega)\right\},
$$ 
with the norm
$$
\Vert  u\Vert _{W^{1,p(\cdot)} (\Omega)}  =\Vert    \nabla u\Vert  _{L^{p(\cdot)}  (\Omega)} + \Vert u\Vert    _{L^{p(\cdot)} (\Omega)} .
$$
Both $L^{p(\cdot)}(\Omega)$ and $W^{1, p(\cdot)}(\Omega)$ are separable, reflexive and uniformly convex Banach spaces, see \cite{fanxi, kova}.

\begin{proposition}\label{31} (see \cite[Theorem 2.1]{kova}).
For any $u \in L^{p(\cdot)}(\Omega)$ and $v \in L^{\frac{p}{p-1}(\cdot)}(\Omega)$,  we have
$$
\int_{\Omega}\vert    u v\vert     \dx \leq 2\Vert  u\Vert  _{L^{p(\cdot)} (\Omega)}\Vert  v\Vert  _{L^{\frac{p}{p-1}(\cdot)} (\Omega)}.
$$
\end{proposition}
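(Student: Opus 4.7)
The plan is to prove the variable-exponent Hölder inequality by combining a pointwise Young inequality with the homogeneity of the Luxemburg norm, mimicking the classical proof for constant exponents.

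First I would exploit homogeneity: both sides of the desired inequality scale the same way under $u \mapsto \lambda u$ and $v \mapsto \mu v$, so I can assume $\|u\|_{L^{p(\cdot)}(\Omega)} = \|v\|_{L^{p/(p-1)(\cdot)}(\Omega)} = 1$ and reduce the claim to showing $\int_\Omega |uv|\, \textnormal{d}x \le 2$. Under this normalization, the definition of the Luxemburg norm (together with the Fatou-type argument that the infimum is attained when $p^+ < \infty$) yields the modular bounds
\begin{equation*}
\int_\Omega |u(x)|^{p(x)}\, \textnormal{d}x \le 1, \qquad \int_\Omega |v(x)|^{p(x)/(p(x)-1)}\, \textnormal{d}x \le 1.
\end{equation*}

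Next I would apply Young's inequality pointwise in $x$ with conjugate exponents $p(x)$ and $p'(x) = p(x)/(p(x)-1)$, giving
\begin{equation*}
|u(x) v(x)| \le \frac{|u(x)|^{p(x)}}{p(x)} + \frac{|v(x)|^{p'(x)}}{p'(x)}.
\end{equation*}
Since $p(x), p'(x) > 1$ pointwise, the coefficients $1/p(x)$ and $1/p'(x)$ are bounded by $1$, so integrating over $\Omega$ and applying the two modular bounds above yields $\int_\Omega |uv|\, \textnormal{d}x \le 1 + 1 = 2$. Undoing the normalization by applying the inequality to $u/\|u\|_{L^{p(\cdot)}}$ and $v/\|v\|_{L^{p'(\cdot)}}$ produces the stated inequality with constant $2$.

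The only nonroutine step is the passage between the Luxemburg norm and the modular in the first step — specifically verifying that $\|u\|_{L^{p(\cdot)}(\Omega)} \le 1$ implies $\int_\Omega |u|^{p(x)}\, \textnormal{d}x \le 1$. This is the standard unit-ball property of Musielak–Orlicz spaces and follows by choosing a minimizing sequence $\tau_k \searrow \|u\|_{L^{p(\cdot)}}$ with $\int_\Omega |u/\tau_k|^{p(x)}\, \textnormal{d}x \le 1$ and invoking monotone convergence as $\tau_k \to 1$. Everything else is direct pointwise algebra and integration, so I expect no analytic obstacle beyond being careful with this modular–norm comparison.
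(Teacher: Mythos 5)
Your proof is correct, and it is essentially the standard argument for this inequality: the paper itself offers no proof, quoting it directly from Kov\'a\v{c}ik--R\'akosn\'{\i}k \cite[Theorem 2.1]{kova}, whose proof is exactly your normalization--modular--pointwise-Young scheme (they keep the sharper constant $\frac{1}{p^-}+\frac{1}{(p')^-}\leqslant 2$, whereas you bound $1/p(x)$ and $1/p'(x)$ by $1$ outright, which still yields the stated constant $2$). The one step you flag as nonroutine --- that $\Vert u\Vert_{L^{p(\cdot)}(\Omega)}\leqslant 1$ forces the modular to be at most $1$ --- is handled correctly by your monotone convergence argument, and in this paper's setting $p\in C_+(\bar\Omega)$ with $\Omega$ bounded guarantees $p^+<\infty$, so no further care is needed.
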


\begin{proposition} \label{6}(see \cite{fan2, fanxi}).
 If  $q\in C(\bar{\Omega})$ and
$$
1 \leqslant  q < p^\ast \quad \text { on } \bar{\Omega},
$$ 
then there is a compact embedding $W^{1, p(\cdot)}(\Omega) \hookrightarrow L^{q(\cdot)}(\Omega)$, where
$$
p^\ast (x) = \left\{
\begin{aligned}
&\frac{np(x)}{n-p(x)} & & \text { if } p(x)<n,\\
&\infty & & \text { if }  p(x)\geqslant n.
\end{aligned}
\right.
$$
\end{proposition}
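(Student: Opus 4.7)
The plan is to reduce this variable-exponent compact embedding to the classical Rellich--Kondrachov theorem via a finite covering argument, combined with the norm--modular equivalence in variable exponent Lebesgue spaces. The argument splits naturally into two stages: first establish the continuous embedding $W^{1,p(\cdot)}(\Omega) \hookrightarrow L^{q(\cdot)}(\Omega)$, and then upgrade it to compactness by local subsequence extraction and patching.

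Since $q < p^{\ast}$ pointwise on the compact set $\bar{\Omega}$ and both exponents are continuous, I would fix $\epsilon > 0$ with $q(x) + 2\epsilon \leqslant p^{\ast}(x)$ on $\bar{\Omega}$, and cover $\bar{\Omega}$ by finitely many open balls $B_1,\ldots,B_N$ chosen so small (using uniform continuity of $p$ and $p^{\ast}$) that, with $p_i^- := \inf_{B_i \cap \bar{\Omega}} p$ and $q_i^+ := \sup_{B_i \cap \bar{\Omega}} q$, one still has $(p_i^-)^{\ast} > q_i^+ + \epsilon$ on each $B_i$; by smoothness of $\partial \Omega$, each $B_i \cap \Omega$ may be assumed to admit a Lipschitz chart. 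For a subordinate partition of unity $\{\phi_i\}$, any $u \in W^{1,p(\cdot)}(\Omega)$ decomposes as $u = \sum_i u \phi_i$ with each summand in $W^{1,p_i^-}(B_i \cap \Omega)$, since on a bounded set $L^{p(\cdot)} \hookrightarrow L^{p_i^-}$. The classical Sobolev embedding then yields $u\phi_i \in L^{q_i^+ + \epsilon}(B_i \cap \Omega)$ with norm bounded by $\Vert u \Vert_{W^{1,p(\cdot)}(\Omega)}$, and a local inclusion $L^{q_i^+ + \epsilon}(B_i \cap \Omega) \hookrightarrow L^{q(\cdot)}(B_i \cap \Omega)$ completes the continuous embedding after summation.

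For compactness, a bounded sequence $\{u_n\} \subset W^{1,p(\cdot)}(\Omega)$ gives rise via the same decomposition to sequences $\{u_n \phi_i\}$ bounded in $W^{1,p_i^-}(B_i \cap \Omega)$; the classical Rellich--Kondrachov theorem supplies a subsequence converging strongly in $L^{q_i^+ + \epsilon}(B_i \cap \Omega)$, and a diagonal extraction across the finitely many indices yields one subsequence converging simultaneously in every patch. The main obstacle is converting piecewise constant-exponent convergence into $L^{q(\cdot)}(\Omega)$-norm convergence: for this I would pass to the modular, splitting $B_i \cap \Omega = \{|u_{n_k} - u| < 1\} \cup \{|u_{n_k} - u| \geqslant 1\}$ and using $q(x) \geqslant q_i^-$ on the first set and $q(x) \leqslant q_i^+ < q_i^+ + \epsilon$ on the second to bound $\int_{B_i \cap \Omega} |u_{n_k} - u|^{q(x)} \dx$ above by $\int_{B_i \cap \Omega}\bigl(|u_{n_k} - u|^{q_i^-} + |u_{n_k} - u|^{q_i^+ + \epsilon}\bigr) \dx$. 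Both terms vanish in the limit by local $L^{q_i^+ + \epsilon}$-convergence (and $L^{q_i^+ + \epsilon} \hookrightarrow L^{q_i^-}$ on the bounded patch), so the modular tends to zero; the norm--modular equivalence from \cite{kova} then forces $\Vert u_{n_k} - u \Vert_{L^{q(\cdot)}(\Omega)} \to 0$, completing the proof.
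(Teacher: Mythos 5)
Your argument is correct and is essentially the standard proof of this embedding: the paper itself states Proposition \ref{6} as a quoted preliminary from \cite{fan2, fanxi} and gives no proof, and the localization-to-constant-exponents strategy you describe (finite cover with a uniform gap $q_i^+ + \epsilon < (p_i^-)^\ast$, partition of unity, classical Rellich--Kondrachov on each patch, then passage from local constant-exponent convergence to the variable-exponent modular and finally to the norm) is exactly how the cited sources establish it. The only points to tidy are technical: where $p(x) \geqslant n$ the exponent $p^\ast$ is not continuous, so the uniform gap should be obtained by treating the sets $\{p \leqslant n - \eta\}$ and $\{p > n - \eta\}$ separately (on the latter $(p_i^-)^\ast$ is arbitrarily large while $q$ is bounded); and for boundary patches it is cleaner to note that $u\phi_i$, extended by zero, lies in $W^{1,p_i^-}(\Omega)$ and to apply Rellich--Kondrachov on the smooth domain $\Omega$ itself rather than on $B_i \cap \Omega$, which need not be Lipschitz.
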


Now, we introduce a norm which will be used later. Let $\beta \in L^{\infty}(\partial \Omega)$, with $\beta^{-}:=\einf _{ \partial \Omega} \beta>0$,  and  $u \in W^{1, p(\cdot)}(\Omega)$, define
$$
\Vert  u\Vert  _{p, \beta}=\inf \left\{\tau>0 \:\vert    \: \int_{\Omega} \left\vert    \frac{\nabla u}{\tau}\right\vert    ^{p} \dx+\int_{\partial \Omega} \beta\left\vert    \frac{u}{\tau}\right\vert    ^{p} \ds \leqslant 1\right\},
$$
where $\ds$ is the measure on the boundary $\partial \Omega$. By \cite[Theorem 2.1]{den}, $\Vert  \cdot\Vert  _{p , \beta}$ is also a norm on $W^{1, p(\cdot)}(\Omega)$ which is equivalent to $\Vert  \cdot\Vert  _{W^{1, p(\cdot)}(\Omega)}$.

For $p\in C_+ (\bar{\Omega})$, we will write $p^- := \inf _{\bar{\Omega}} p$ and $p^+ := \sup _{\bar{\Omega}} p$. An important role in manipulating the generalized Lebesgue-Sobolev spaces is played by the mapping defined by the following

\begin{proposition}(see \cite[Proposition 2.4]{den}). Let $\rho _{p , \beta }(u):=\int_{\Omega}\vert    \nabla u\vert    ^{p} \dx +\int_{\partial \Omega} \beta \vert    u\vert    ^{p} \ds $. For $u \in W^{1, p(\cdot)}(\Omega)$ we have
\begin{enumerate}[(i)]
\item $\Vert  u\Vert  _{p ,\beta }<1(=1,>1) \Leftrightarrow \rho _{p , \beta} (u)<1(=1,>1)$.
\item $\Vert  u\Vert  _{p , \beta} \leqslant 1 \Rightarrow\Vert  u\Vert  _{p , \beta}^{p^{+}} \leqslant \rho _{p , \beta}  (u) \leqslant\Vert  u\Vert  _{p , \beta}^{p^{-}}$.
\item $\Vert  u\Vert  _{p , \beta} \geqslant 1 \Rightarrow\Vert  u\Vert  _{p,\beta}^{p^{-}} \leqslant \rho _{p , \beta } (u) \leqslant\Vert  u\Vert  _{p , \beta}^{p^{+}}$.
\end{enumerate}
\end{proposition}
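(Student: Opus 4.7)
The plan is to mirror the standard modular--norm argument in variable exponent Lebesgue spaces, now with the modular $\rho_{p,\beta}$ incorporating both the gradient term and a boundary trace term; the substantive input is the pointwise comparison between $\tau^{p(x)}$ and $\tau^{p^\pm}$, combined with monotonicity and continuity of the modular in the scaling parameter.

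First I would introduce the auxiliary function $\varphi_u(\tau) := \rho_{p,\beta}(u/\tau)$ for $\tau>0$ and observe, by dominated convergence together with the elementary monotonicity of $\tau\mapsto \tau^{-p(x)}$, that $\varphi_u$ is continuous and strictly decreasing on $(0,\infty)$; moreover $\varphi_u(\tau)\to 0$ as $\tau\to\infty$ and, if $u\not\equiv 0$, $\varphi_u(\tau)\to\infty$ as $\tau\to 0^+$. Combined with $\Vert u\Vert_{p,\beta}=\inf\{\tau>0:\varphi_u(\tau)\leqslant 1\}$, this forces $\Vert u\Vert_{p,\beta}$ to be the unique positive solution of $\varphi_u(\tau)=1$; in particular $\rho_{p,\beta}(u/\Vert u\Vert_{p,\beta})=1$ whenever $u\neq 0$. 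Item (i) then follows at once from strict monotonicity of $\varphi_u$, since $\Vert u\Vert_{p,\beta}$ is below, equal to, or above $1$ precisely when $\varphi_u(1)=\rho_{p,\beta}(u)$ is below, equal to, or above $1$, respectively.

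For items (ii) and (iii), I would set $\tau=\Vert u\Vert_{p,\beta}>0$ and rewrite the identity $\varphi_u(\tau)=1$ as
\[
1 \;=\; \int_{\Omega} \tau^{-p(x)} \vert \nabla u\vert ^{p(x)}\,\dx + \int_{\partial\Omega} \beta\,\tau^{-p(x)} \vert u\vert ^{p(x)}\,\ds.
\]
If $\tau\geqslant 1$ then $\tau^{-p^+}\leqslant \tau^{-p(x)}\leqslant \tau^{-p^-}$; multiplying the displayed equality by $\tau^{p^+}$ yields $\tau^{p^+}\geqslant \rho_{p,\beta}(u)$, while multiplying by $\tau^{p^-}$ yields $\tau^{p^-}\leqslant \rho_{p,\beta}(u)$, which is (iii). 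If $\tau\leqslant 1$, the reversed bounds $\tau^{-p^-}\leqslant \tau^{-p(x)}\leqslant \tau^{-p^+}$ hold and the two analogous multiplications give (ii). The degenerate case $u\equiv 0$ is trivial since both sides of each inequality vanish.

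The only mildly delicate step I foresee is establishing the identity $\rho_{p,\beta}(u/\Vert u\Vert_{p,\beta})=1$, which is the main (but standard) obstacle; it rests on the continuity of $\varphi_u$ and on ruling out that the infimum defining $\Vert u\Vert_{p,\beta}$ is attained only as a limit. This is handled by a dominated convergence argument applied separately to the bulk integrand $\tau^{-p(x)}\vert \nabla u\vert ^{p(x)}$ and to the boundary integrand $\beta\tau^{-p(x)}\vert u\vert ^{p(x)}$, using that $\beta\in L^\infty(\partial\Omega)$ and that the trace of $u\in W^{1,p(\cdot)}(\Omega)$ lies in $L^{p(\cdot)}(\partial\Omega)$, together with the monotone behavior at the endpoints $\tau\to 0^+$ and $\tau\to\infty$.
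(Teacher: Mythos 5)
The paper does not prove this proposition; it is quoted directly from \cite[Proposition 2.4]{den}, so there is no in-paper argument to compare against. Your proof is the standard modular--norm argument (continuity and strict monotonicity of $\tau\mapsto\rho_{p,\beta}(u/\tau)$ forcing $\rho_{p,\beta}\bigl(u/\Vert u\Vert_{p,\beta}\bigr)=1$ for $u\neq 0$, then the pointwise comparison of $\tau^{-p(x)}$ with $\tau^{-p^{\pm}}$), and it is correct as written, including the separate treatment of $u\equiv 0$.
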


The Euler-Lagrange functional associated with \eqref{1} is  defined as $\Phi_{\lambda}: \wma \rightarrow \mathbb{R}$,
$$
\Phi_{\lambda}(u)=\int_{\Omega} \frac{1}{p}\vert    \nabla u\vert    ^{p} +  \frac{1}{q}\vert    \nabla u\vert    ^{q} \dx +\int_{\partial \Omega} \frac{\beta _1}{p}\vert    u\vert    ^{p} + \frac{\beta _2}{q}\vert    u\vert    ^{q} \ds -\lambda \int_{\Omega} \frac{\alpha}{r}\vert    u\vert    ^r \dx ,
$$
where  $p, q, r \in C_{+}(\bar{\Omega})$, $r^+ < (\ma ^\ast)^- $, $\alpha \in L^\infty (\Omega)$, $\alpha ^- >0$, and $\beta _1, \beta _2 \in L^{\infty} (\partial \Omega)$, with $\beta _1^-, \beta _2 ^->0$ or $\beta _1 \equiv \beta _2 \equiv 0$. Standard arguments imply that $\Phi_{\lambda} \in C^{1}(\wma, \mathbb{R})$ and
\begin{equation} \label{51}
\begin{split}
&\left\langle\Phi_{\lambda}^{\prime}(u), v\right\rangle=\int_{\Omega}\left( \vert    \nabla u\vert    ^{p-2} + \vert    \nabla u\vert    ^{q-2} \right) \nabla u  \nabla v  \dx + \int_{\partial \Omega} \left( \beta _1 \vert    u\vert    ^{p-2} + \beta _2 \vert    u\vert    ^{q-2} \right) u v \ds\\
&-\lambda \int_{\Omega}\alpha \vert    u\vert    ^{r-2} u v \dx,
\end{split}
\end{equation}
for all $u, v \in \wma$. Thus, the weak solutions of \eqref{1} coincide with the critical points of $\Phi_{\lambda}$. If such a weak solution exists and is nontrivial, then the corresponding $\lambda$ is an eigenvalue of problem \eqref{1}.

Finally, we define $L_{p,\beta} : W^{1,p(\cdot)} (\Omega) \rightarrow \left( W^{1,p(\cdot)}(\Omega) \right)^\ast$ by
$$
\left\langle L_{p,\beta}(u),v \right\rangle = \int _{\Omega} \vert \nabla u\vert ^{p-2} \nabla u \nabla v \dx + \int _{\partial \Omega} \beta \vert u\vert ^{p-2} uv \ds,
$$
where $\beta \in L^\infty (\partial \Omega)$ and $\beta ^- >0$.

\begin{proposition}\label{2} (see  \cite[Proposition 2.2]{ge}). 
$L_{p,\beta}$ is a mapping of type ($S_+$), \textit{i.e.}, if $u_{k} \rightharpoonup u$ in  $W^{1,p(\cdot)}(\Omega) $, and $\lim \sup _{k \rightarrow \infty }\left\langle L_{p,\beta} \left(u_k\right) - L_{p,\beta}(u),u_k-u\right\rangle \leqslant 0$. Then  $u_k \rightarrow u$ in $W^{1,p(\cdot)}(\Omega) $.
\end{proposition}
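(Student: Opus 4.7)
The plan is to establish the $(S_+)$ property by first eliminating the boundary contribution via compactness, and then exploiting the pointwise monotonicity of $\xi \mapsto |\xi|^{p-2}\xi$ to upgrade weak convergence into strong convergence of gradients.

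First I would invoke the compact embedding $W^{1,p(\cdot)}(\Omega)\hookrightarrow L^{p(\cdot)}(\Omega)$ from Proposition~\ref{6} (taking $q=p$) and the analogous compactness of the trace operator $W^{1,p(\cdot)}(\Omega)\hookrightarrow L^{p(\cdot)}(\partial\Omega)$, standard in variable-exponent Sobolev theory, to obtain $u_k\to u$ strongly in both spaces. Combining this with the uniform boundedness of $\{|u_k|^{p-2}u_k\}$ in $L^{p/(p-1)(\cdot)}(\partial\Omega)$ and the Hölder inequality of Proposition~\ref{31}, the boundary piece
$$\int_{\partial\Omega}\beta\bigl(|u_k|^{p-2}u_k-|u|^{p-2}u\bigr)(u_k-u)\,\ds \longrightarrow 0.$$
Subtracting this from $\langle L_{p,\beta}(u_k)-L_{p,\beta}(u),u_k-u\rangle$ yields
$$\limsup_{k\to\infty}\int_\Omega\bigl(|\nabla u_k|^{p-2}\nabla u_k-|\nabla u|^{p-2}\nabla u\bigr)\cdot\nabla(u_k-u)\,\dx \;\leq\; 0.$$
Since the integrand is pointwise nonnegative by monotonicity, the limsup is in fact a limit equal to zero.

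Next I would convert this vanishing energy into norm convergence of gradients. I would split $\Omega$ into $\Omega_1=\{x\in\Omega:p(x)\geq 2\}$ and $\Omega_2=\Omega\setminus\Omega_1$, and apply the two classical Simon inequalities
$$\bigl(|\xi|^{p-2}\xi-|\eta|^{p-2}\eta\bigr)\cdot(\xi-\eta)\;\geq\; c\,|\xi-\eta|^{p}\quad\text{if }p\geq 2,$$
$$\bigl(|\xi|^{p-2}\xi-|\eta|^{p-2}\eta\bigr)\cdot(\xi-\eta)\;\geq\; c\,\frac{|\xi-\eta|^{2}}{(|\xi|+|\eta|)^{2-p}}\quad\text{if }1<p<2,$$
with a constant $c=c(p)>0$. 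On $\Omega_1$ the first inequality gives $\int_{\Omega_1}|\nabla(u_k-u)|^{p(x)}\,\dx\to 0$ directly. On $\Omega_2$ I would apply Hölder's inequality pointwise with the exponents $2/p(x)$ and $2/(2-p(x))$ to bound $\int_{\Omega_2}|\nabla(u_k-u)|^{p(x)}\,\dx$ by a power of the vanishing energy times a factor controlled by $\|\nabla u_k\|_{L^{p(\cdot)}(\Omega)}+\|\nabla u\|_{L^{p(\cdot)}(\Omega)}$, which is uniformly bounded by weak convergence.

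Combining the two estimates yields $\int_\Omega|\nabla(u_k-u)|^{p(x)}\,\dx\to 0$, hence $\nabla u_k\to \nabla u$ in $L^{p(\cdot)}(\Omega)$ by the modular-norm relations, and together with the already established strong convergence $u_k\to u$ in $L^{p(\cdot)}(\Omega)$ this gives $u_k\to u$ in $W^{1,p(\cdot)}(\Omega)$. The main technical obstacle is the estimate on $\Omega_2$: the fractional form of Simon's inequality for $1<p<2$ sits awkwardly inside a variable-exponent modular, so applying Hölder pointwise and then translating the resulting modular estimate back to norms requires care. Outside of this, the argument is a direct adaptation of the constant-exponent case.
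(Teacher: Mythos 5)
The paper offers no proof of this proposition at all: it is imported verbatim from the cited reference (Ge, Proposition 2.2), so there is nothing internal to compare against. Your argument is the standard proof of the $(S_+)$ property for this operator and is essentially correct: the boundary term is killed by the compact trace embedding $W^{1,p(\cdot)}(\Omega)\hookrightarrow L^{p(\cdot)}(\partial\Omega)$ (valid here since $p<p^{\partial}$ on $\partial\Omega$) together with the uniform bound on $|u_k|^{p-2}u_k$ in $L^{\frac{p}{p-1}(\cdot)}(\partial\Omega)$, and the remaining nonnegative gradient energy is forced to zero, after which Simon's inequalities on $\{p\geq 2\}$ and $\{p<2\}$ give modular convergence of the gradients. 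The one step you rightly flag as delicate, the $\Omega_2$ estimate, is cleanest if you avoid an integral H\"older inequality with the varying exponent $2/p(x)$ altogether and instead apply Young's inequality pointwise with exponents $\frac{2}{p(x)}$ and $\frac{2}{2-p(x)}$, writing
\[
|\nabla(u_k-u)|^{p}\;\leq\; C_{\varepsilon}\,\frac{|\nabla(u_k-u)|^{2}}{\bigl(|\nabla u_k|+|\nabla u|\bigr)^{2-p}}\;+\;\varepsilon\,\bigl(|\nabla u_k|+|\nabla u|\bigr)^{p}\quad\text{on }\Omega_2,
\]
then integrating, letting $k\to\infty$ and finally $\varepsilon\to 0$; the constants in both Simon inequalities are uniform because $p$ ranges over the compact intervals $[2,p^+]$ and $[p^-,2]$. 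With that adjustment the proof is complete.
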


\section{Main results}

We organize  our main results into five sections. In Case \ref{35}, we prove Theorem \ref{52} \eqref{3}. In Case \ref{36}, we prove Theorem \ref{52} \eqref{4}. In Case \ref{37}, we prove Theorem \ref{52} \eqref{5}. In Case  \ref{38}, we prove Theorem  \ref{53} \eqref{28}. In Case  \ref{44}, we prove Theorem  \ref{53} \eqref{43}. In the first three sections we will follow \cite{allaoui21}, and in the last two sections we follow  \cite{mihua}.

\subsection{The case $r^+ < \min\{p^- , q^-\}$} \label{35} 

We want to apply the symmetric mountain pass lemma in \cite{kajikiya}. We start with the following 

\begin{definition} Let $X$ be a Banach space and $E$ a subset of $X$. $E$ is said to be symmetric if $u \in E$ implies $-u \in E$. For a closed symmetric set $E$ which does not contain the origin, we define a genus $\gamma(E)$ of $E$ by the smallest integer $k$ such that there exists an odd continuous mapping from $E$ to $\mathbb{R}^{k} \backslash\{0\}$. If there does not exist such a $k$, we define $\gamma(E)=\infty$. Moreover, we set $\gamma(\emptyset)=0$. Let $\Gamma_{k}$ denote the family of closed symmetric subsets $E$ of $X$ such that $0 \notin E$ and $\gamma(E) \geqslant k$.
\end{definition}
\begin{ass}
Let $X$ be an infinite dimensional Banach space and $I \in C^{1}(X , \mathbb{R})$ satisfy (A1) and (A2) below.
\begin{enumerate}
\item[(A1)] $I(u)$ is even, bounded from below, $I(0)=0$ and $I(u)$ satisfies the Palais-Smale condition (PS).

\item[(PS)] Any sequence $\left\{u_{k}\right\}$ in $X$ such that $\left\{I\left(u_{k}\right)\right\}$ is bounded and $I^{\prime}\left(u_{k}\right) \rightarrow 0$ in $X^\ast$ as $k \rightarrow \infty$ has a convergent subsequence.

\item[(A2)] For each $k \in \mathbb{N}$, there exists an $E_{k} \in \Gamma_{k}$ such that $\sup _{u \in E_{k}} I(u)<0$.
\end{enumerate}
\end{ass}

\begin{theorem} \label{9}
(Symmetric mountain pass lemma) Under Assumption (A), either (i) or (ii) below holds.
\begin{enumerate}[(i)]
\item There exists a sequence $\{u_k\}$ such that $I^\prime (u_k) = 0$, $I\left(u_{k}\right)<0$ and $\{u_k\}$ converges to zero.  

\item There exist two sequences $\{u_k\}$ and $\{v_k\}$ such that $I^\prime (u_k) = 0$, $I(u_k) =0$, $u_{k} \neq 0 $,  $\lim _{k\rightarrow \infty} u_{k}=0$, $I^\prime (v_k) =0$, $I(v_k) <0$, $\lim _{k\rightarrow \infty} I(v_{k})=0$, and $\{v_k\}$ converges to a non-zero limit.
\end{enumerate}

\end{theorem}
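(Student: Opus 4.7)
The plan is to apply the classical equivariant minimax method based on the Krasnoselskii genus. For each $k\geq 1$ I define
$$c_k := \inf_{E \in \Gamma_k} \sup_{u \in E} I(u).$$
Because $I$ is bounded below by (A1) and the test sets in (A2) give $c_k \leq \sup_{E_k} I < 0$, one has $-\infty < c_k < 0$. Since $\Gamma_{k+1}\subset\Gamma_k$, the sequence $\{c_k\}$ is non-decreasing and thus converges to some $c^\ast \in (-\infty, 0]$.

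Next, I use the even $C^1$ structure of $I$ together with (PS) to invoke the equivariant (odd) deformation lemma: each $c_k$ is a critical value, and whenever $c_k = c_{k+1} = \cdots = c_{k+p-1} = c$ the critical set $K_c := \{u \in X\setminus\{0\} : I(u)=c,\ I'(u)=0\}$ satisfies $\gamma(K_c) \geq p$. The argument is the standard one: if $\gamma(K_c)<p$, choose a symmetric open neighbourhood $U\supset K_c$ with $\gamma(\overline{U})<p$, and flow an almost-minimizing set $E \in \Gamma_{k+p-1}$ past level $c$ outside $U$ by an odd pseudo-gradient, contradicting the definition of $c_{k+p-1}$.

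I then argue that $c^\ast=0$. If $c^\ast<0$, compactness of $K_{c^\ast}$ (a consequence of (PS) at a negative level) forces $\gamma(K_{c^\ast})<\infty$. But the multiplicity statement above, combined with $c_k\nearrow c^\ast$, forces $\gamma(K_{c^\ast})=\infty$: in the stable case $c_k = c^\ast$ for infinitely many $k$ this is immediate, and in the strictly increasing case the same deformation applied to a symmetric neighbourhood of $K_{c^\ast}$ produces sets in $\Gamma_k$ lying below $c^\ast-\varepsilon$ for every $k$, contradicting $c_k\to c^\ast$. Hence $c^\ast=0$.

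To finish, pick critical points $v_k\in K_{c_k}$. By (PS), after passing to a subsequence, $v_k$ converges to some $v^\ast$ with $I(v^\ast)=0$ and $I'(v^\ast)=0$. If $v^\ast=0$ this yields alternative (i). Otherwise $v^\ast\ne 0$ and the $\{v_k\}$ serve as the second sequence required in (ii); the first sequence, critical points $u_k\ne 0$ with $I(u_k)=0$ and $u_k\to 0$, is obtained by localising the genus-minimax procedure to shrinking symmetric balls around the origin, where (A2) still supplies genus-$k$ test sets with negative supremum. The main obstacle is the equivariant deformation argument underlying the multiplicity bound $\gamma(K_c)\geq p$, which requires a careful odd pseudo-gradient construction respecting $U$; a secondary delicate point is extracting the vanishing sequence $\{u_k\}$ in case (ii) with exactly $I(u_k)=0$ rather than $I(u_k)\to 0$.
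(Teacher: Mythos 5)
First, a point of context: the paper does not prove Theorem \ref{9} at all; it is quoted verbatim from Kajikiya's paper \cite{kajikiya} and used as a black box. So there is no in-paper proof to compare against, and your attempt has to be judged against the actual content of that theorem. Your skeleton --- the genus minimax values $c_k=\inf_{E\in\Gamma_k}\sup_E I$, the bounds $-\infty<c_k<0$, monotonicity, the fact that each $c_k$ is a critical value with the multiplicity estimate $\gamma(K_c)\geqslant p$ for repeated values, the proof that $c^\ast=0$, and the dichotomy obtained by letting $v_k\in K_{c_k}$ and extracting a (PS)-limit $v^\ast$ --- is the standard Clark-type argument and correctly reproduces the \emph{easier} half of the statement (all of alternative (i), and the sequence $\{v_k\}$ in alternative (ii)). One repair is needed even there: in the strictly increasing case of your Step 3, excising a neighbourhood of $K_{c^\ast}$ alone does not let you deform $I^{c^\ast+\varepsilon}$ down to $I^{c^\ast-\varepsilon}$, because the intermediate critical levels $c_k\in(c^\ast-\varepsilon,c^\ast)$ obstruct the flow; you must excise a neighbourhood of the full critical set $\{u: I'(u)=0,\ I(u)\in[c^\ast-\bar\varepsilon,c^\ast+\bar\varepsilon]\}$, which is compact by (PS) and avoids $0$ because $c^\ast+\bar\varepsilon<0=I(0)$. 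That is fixable.

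The genuine gap is the last sentence of your proof: the sequence $\{u_k\}$ in alternative (ii), i.e.\ nontrivial critical points with $I(u_k)=0$ \emph{exactly} and $u_k\to 0$. This is precisely the new content of Kajikiya's theorem beyond the classical Clark theorem, and ``localising the genus-minimax procedure to shrinking symmetric balls around the origin'' does not produce it. Two things fail. First, (A2) supplies, for each $k$, \emph{one} set $E_k\in\Gamma_k$ with $\sup_{E_k}I<0$, with no control on its location; rescaling $E_k$ into a small ball $B_\rho(0)$ preserves its genus but not the sign of $\sup I$, since nothing in (A1)--(A2) guarantees $I(tu)<0$ for small $t$ when $I(u)<0$. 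Second, even if you had genus-$k$ sets with negative supremum inside $B_\rho(0)$, a minimax constrained to $B_\rho(0)$ is not invariant under the (odd) pseudo-gradient flow --- the deformation can carry sets out of the ball --- so the constrained minimax values need not be critical values, and the level-$0$ critical set containing the origin cannot be handled by the finite-genus excision argument (a closed symmetric neighbourhood of $0$ has no genus in $\Gamma_k$, so subadditivity is unavailable). Handling this requires a separate deformation/genus analysis of the critical set at level $0$ near the origin under the assumption that (i) fails; you correctly flag it as delicate, but flagging it is not proving it, and as written the proposal establishes only the weaker, classical conclusion.
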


We will need the following result.

\begin{lemma}  \label{32}
\begin{enumerate}[(i)]
\item \label{7} The functional $\Phi_{\lambda}$ satisfies the condition (A1).
\item  \label{8} The functional $\Phi_{\lambda}$ satisfies the condition (A2).
\end{enumerate}
\end{lemma}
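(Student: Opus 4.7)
The plan is to verify conditions (A1) and (A2) of Assumption~A for the functional $\Phi_\lambda$, exploiting the strict subcritical ordering $r^+<\min\{p^-,q^-\}$.

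\textbf{Verifying (A1).} The identities $\Phi_\lambda(0)=0$ and $\Phi_\lambda(-u)=\Phi_\lambda(u)$ are immediate from the defining formula. For the lower bound I would first establish coercivity: for $\Vert u\Vert_{\wma}$ large, Proposition~2.3 applied to the equivalent norms $\Vert\cdot\Vert_{p,\beta_1}$ and $\Vert\cdot\Vert_{q,\beta_2}$ bounds the positive part of $\Phi_\lambda$ below by a multiple of $\Vert u\Vert_{\wma}^{p^-}$ (or $\Vert u\Vert_{\wma}^{q^-}$), while the continuous embedding $\wma\hookrightarrow L^{r(\cdot)}(\Omega)$ from Proposition~2.2 bounds the potential term above by a multiple of $\Vert u\Vert_{\wma}^{r^+}$. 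Since $r^+<\min\{p^-,q^-\}$, $\Phi_\lambda(u)\to+\infty$ as $\Vert u\Vert_{\wma}\to\infty$, giving both coercivity and boundedness from below.

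For the Palais-Smale condition, let $\{u_k\}\subset\wma$ satisfy $|\Phi_\lambda(u_k)|\leq M$ and $\Phi_\lambda'(u_k)\to 0$ in $\wma^\ast$. Coercivity forces $\{u_k\}$ to be bounded, so along a subsequence $u_k\rightharpoonup u$ in $\wma$. Testing $\Phi_\lambda'(u_k)$ against $u_k-u$ and exploiting compactness of $\wma\hookrightarrow L^{r(\cdot)}(\Omega)$ to eliminate the $r$-contribution yields
\[
\langle L_{p,\beta_1}(u_k)+L_{q,\beta_2}(u_k),\,u_k-u\rangle\to 0.
\]
Subtracting $\langle L_{p,\beta_1}(u)+L_{q,\beta_2}(u),u_k-u\rangle\to 0$ (from weak convergence) and invoking monotonicity of both operators, the two non-negative pairings $\langle L_{p,\beta_i}(u_k)-L_{p,\beta_i}(u),u_k-u\rangle$ must tend to zero separately. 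Proposition~2.4 then gives $u_k\to u$ in $W^{1,p(\cdot)}(\Omega)$ and in $W^{1,q(\cdot)}(\Omega)$. Since $\ma=\max\{p,q\}$ pointwise, the modular associated with $W^{1,\ma(\cdot)}$ is dominated by the sum of those for $W^{1,p(\cdot)}$ and $W^{1,q(\cdot)}$, so $u_k\to u$ in $\wma$ as required.

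\textbf{Verifying (A2).} Fix $k\in\mathbb{N}$ and choose a $k$-dimensional subspace $X_k\subset\wma$. For $u=tv$ with $v\in X_k$, $\Vert v\Vert_{\wma}=1$, and $0<t\leq 1$, the inequality $t^{r(x)}\geq t^{r^+}$ combined with compactness of the unit sphere of $X_k$ yields a constant $c_k>0$ with
\[
\int_\Omega \alpha|u|^r\,\dx\geq \alpha^- c_k\Vert u\Vert_{\wma}^{r^+}.
\]
On the other hand, Proposition~2.3 together with the continuous inclusions of $\wma$ into $W^{1,p(\cdot)}(\Omega)$ and $W^{1,q(\cdot)}(\Omega)$ gives, for $\Vert u\Vert_{\wma}<1$,
\[
\int_{\Omega}\tfrac{|\nabla u|^p}{p}\dx+\int_{\partial\Omega}\tfrac{\beta_1|u|^p}{p}\ds+\int_{\Omega}\tfrac{|\nabla u|^q}{q}\dx+\int_{\partial\Omega}\tfrac{\beta_2|u|^q}{q}\ds\leq C_1\Vert u\Vert_{\wma}^{p^-}+C_2\Vert u\Vert_{\wma}^{q^-}.
\]
Combining, on the sphere $E_k:=\{u\in X_k:\Vert u\Vert_{\wma}=\rho\}$ we obtain $\Phi_\lambda(u)\leq C_1\rho^{p^-}+C_2\rho^{q^-}-\lambda\alpha^-c_k\rho^{r^+}/r^+$, which is strictly negative for $\rho\in(0,1)$ chosen small enough, since $r^+<\min\{p^-,q^-\}$. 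As $E_k$ is closed, symmetric, avoids the origin, and is linearly homeomorphic to $S^{k-1}\subset\mathbb{R}^k$, $\gamma(E_k)=k$ and hence $E_k\in\Gamma_k$.

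The main obstacle I anticipate is the last step of the Palais-Smale argument: Proposition~2.4 is stated one Laplacian at a time, so transferring the two separate strong convergences back into the mixed-exponent space $\wma=W^{1,\ma(\cdot)}(\Omega)$ requires the pointwise modular domination $\rho_{\ma(\cdot)}\leq\rho_{p(\cdot)}+\rho_{q(\cdot)}$ afforded by $\ma=\max\{p,q\}$. The remainder of the argument is routine bookkeeping with the norm-modular equivalences of Proposition~2.3 and the compact embedding of Proposition~2.2.
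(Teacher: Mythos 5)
Your argument is correct and follows essentially the same route as the paper: coercivity from the subcritical growth $r^+<\min\{p^-,q^-\}$, the Palais--Smale condition via the compact embedding into $L^{r(\cdot)}(\Omega)$, monotonicity, and the $(S_+)$ property of $L_{p,\beta_1}$ and $L_{q,\beta_2}$, and condition (A2) on small spheres of $k$-dimensional subspaces (the paper takes disjointly supported bump functions so that the boundary integrals vanish, but your generic subspace together with the norm--modular bounds works just as well). Your closing observation about recombining the two strong convergences into $\wma$ via the pointwise inequality $|\nabla u|^{\ma}\leqslant|\nabla u|^{p}+|\nabla u|^{q}$ correctly settles a step the paper leaves implicit.
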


\begin{proof}
\eqref{7} It is clear that $\Phi_{\lambda}$ is even and $\Phi_{\lambda}(0)=0$. Let $u\in \wma$. Since $r^{+}< \ma ^{-}$, by Young's inequality, we have
$$
\vert    u\vert    ^r \leqslant \varepsilon \vert    u\vert    ^{\ma} + C_1 \left(\varepsilon , r , \ma\right) \quad \text { on } \Omega.
$$ 
Then
$$
\begin{aligned}
\Phi_{\lambda}(u) & \geqslant C_2 \left(\int_{\Omega}\vert    \nabla u\vert    ^{\ma} \dx+\int_{\partial \Omega} \vert    u\vert    ^{\ma} \ds\right)- \frac{\lambda \alpha ^+}{r^-} \left(\varepsilon  \int _{ \Omega} \vert    u\vert    ^{\ma} \dx + C_1 \vert    \Omega\vert     \right),
\end{aligned}
$$
where $C_2 = \frac{\min \{1,\beta _1 ^- , \beta _2 ^-\}}{\max \{p^+ , q^+\}}$. Choosing $\varepsilon = \frac{r^- C_2}{2\lambda \alpha ^+}$, we have
$$
\Phi_{\lambda}(u)  \geqslant \frac{C_2}{2}  \left(\int_{\Omega}\vert    \nabla u\vert    ^{\ma} \dx+\int_{\partial \Omega} \vert    u\vert    ^{\ma} \ds\right)- \frac{\lambda \alpha ^+ C_1}{r^-}\vert     \Omega\vert    .
$$
Therefore, $\Phi_{\lambda}$ is bounded from below and coercive. It remains to show that the functional $\Phi_{\lambda}$ satisfies the (PS) condition to complete the proof. Let $\{ u_k \} \subset \wma$ be a sequence such that
$$
\{ \Phi_{\lambda}\left(u_k \right) \} \text { is bounded } \quad \text { and } \quad \Phi_{\lambda}^{\prime}\left(u_k \right) \rightarrow 0 \text { in } \wma ^\ast . 
$$
Then, by the coercivity of $\Phi_{\lambda}$, the sequence $\{u_k \}$ is bounded in $\wma$. By the reflexivity of $\wma$ and Proposition \ref{6}, for a subsequence still denoted $\{ u_k \}$, we have
$$
u_k  \rightharpoonup u \text { in } \wma \quad \text { and } \quad u_k \rightarrow u \text { in } L^{r(\cdot)}(\Omega) .
$$
Therefore,
$$
\left\langle\Phi_{\lambda}^{\prime} (u_k) - \Phi_{\lambda}^{\prime} (u ) , u_k-u\right\rangle \rightarrow 0 \quad \text { and } \quad \int_{ \Omega} \alpha \left( \vert   u_k \vert    ^{r-2}u_k - \vert   u \vert    ^{r-2}u \right)\left(u_k-u\right) \dx \rightarrow 0 .
$$
Thus
\begin{equation*}
\left\langle \left(L_{p,\beta _1} + L_{q,\beta _2} \right) (u_k ) - \left(L_{p,\beta _1} + L_{q,\beta _2} \right)(u),u_k-u \right\rangle \rightarrow 0 .
\end{equation*}
On the other hand,  we have  $\left( \vert a \vert ^{\sigma -2} a - \vert b \vert ^{\sigma -2}b\right)\cdot (a-b)\geqslant 0$ for all  $a,b\in \mathbb{R}^{m}\backslash \{0\}$ with $m\geqslant 1$ and  $\sigma >1$. Consequently
\begin{gather*}
u_k \rightharpoonup u \text { in } W^{1,p(\cdot)} (\Omega), \quad  \limsup _{k \rightarrow \infty }\left\langle L_{p,\beta _1} \left(u_k\right) - L_{p,\beta _1}(u),u_k-u\right\rangle \leqslant 0 ,\\
u_k \rightharpoonup u \text { in } W^{1,q(\cdot)} (\Omega), \quad  \text { and } \quad \limsup _{k \rightarrow \infty }\left\langle L_{q,\beta _2} \left(u_k\right) - L_{q,\beta _2}(u),u_k-u\right\rangle \leqslant 0.
\end{gather*}
Therefore,  according to Lemma \ref{2},  $u_k \rightarrow u$ in $\wma$. The proof is complete.\\


\eqref{8} Let $\{\varphi _k\} _{k=1}^\infty \subset C^\infty _0 (\mathbb{R} ^n)$   be such that  $\spt (\varphi _k)\subset \Omega$, $\{\varphi _k \neq 0\} \neq \emptyset$, and $\spt (\varphi _j) \cap \spt (\varphi _k) = \emptyset$ if $j\neq k$.  Take $F_k= \operatorname{span}\left\{\varphi_{1}, \varphi_{2}, \ldots, \varphi _k \right\}$, it is clear that $\operatorname{dim} F_k = k$. 

\noindent Denote 
$$
\ell _k = \inf  _{u\in S_{\beta _1} \cap F_k} \int _\Omega \vert  u \vert    ^r \dx >0 \quad \text { and } \quad  \mu  _k = \sup _{u \in S_{\beta _1} \cap F_k}  \int _{\Omega} \vert    \nabla u \vert    ^q \dx    >0,
$$ 
where $S_{\beta _1}=\left\{u \in \wma\:\vert    \:\Vert u \Vert  _{p, \beta _1} =1\right\}$. Write $E_k (t)=t\left(S_{\beta _1} \cap F_k\right)$ for $0<t < 1$. Obviously, $\gamma\left(E_k (t)\right)=k$, for all $t \in ( 0,1)$. We deduce, $E_k (t) \in \Gamma _k$.

Let $k\in \{1, 2, \ldots\}$. Now, we show that there exists $t_0 \in (0,1)$  such that
$$
\sup _{u \in E_k \left(t_0 \right)} \Phi_{\lambda}(u)<0.
$$
Indeed, we have $\lambda > \frac{r^+}{ \alpha ^- \ell _k}\left(\frac{1}{p^-} t_0 ^{p^- - r^+} + \frac{\mu _k}{ q^-} t_0 ^{q^- - r^+} \right)$ for some $t_0 \in (0,1)$. Then,
$$
\begin{aligned}
& \sup _{u \in E_k (t_0)} \Phi_{\lambda}(u) =  \sup _{v \in S_{\beta _1}  \cap F_k} \Phi _\lambda (t_0 v) \\
&= \sup _{v \in S_{\beta _1}  \cap F_k } \int_{\Omega} \frac{t_0 ^{p}}{p}\vert    \nabla v\vert    ^p + \frac{t_0 ^q}{q}\vert    \nabla v \vert    ^q  \dx - \lambda \int_{\Omega} \frac{\alpha t_0 ^r}{r}\vert  v \vert    ^r \dx  \\
& \leqslant  \frac{1}{p^-} t_0 ^{p^-} + \frac{\mu_k}{q^- } t_0 ^{q^-} - \frac{\lambda  \alpha ^- \ell _k}{r^+}  t_0 ^{r^{+}} <0.
\end{aligned}
$$
This completes the proof.

\end{proof}

\noindent \textbf{Proof of Theorem  \ref{52} \eqref{3}.} By Lemma \ref{32} and Theorem \ref{9}, $\Phi_{\lambda}$ admits a sequence of nontrivial weak solutions $\{ u_k \}$ such that for any $k$, we have
$$
u_k \neq 0, \quad \Phi_{\lambda}^{\prime}\left(u_k\right)=0, \quad \Phi_{\lambda}\left(u_k\right) \leqslant 0, \quad \text { and } u_k\rightarrow 0 \text { in } \wma.
$$
\begin{flushright}
$\square$
\end{flushright}


\subsection{The case  $r^- <  \min \{p^- , q^-\}$  and   $r^+ <\left( \ma ^\ast \right)^-$}  \label{36}

Since  $r^+ <\left( \ma ^\ast \right)^-$, from Proposition \ref{6} it follows that $\wma$ is continuously embedded in $L^{r(\cdot)} (\Omega)$.  Write
\begin{equation} \label{12} 
C_\ast := \sup _{u\in\wma \backslash \{0\}} \frac{\Vert  u\Vert  _{L^{r(\cdot)}(\Omega)} }{\Vert  u\Vert  _{\ma , 1}}<\infty.
\end{equation}

To prove the Theorem \ref{52} \eqref{4},  we  use the  Ekeland's variational principle (see \cite{de198}):

\begin{theorem}(Ekeland Principle-weak form). Let $(X,d)$ be a complete metric space. Let $\Phi: X \rightarrow \mathbb{R}\cup \{\infty\}$ be lower semicontinuous and bounded below. Then given $\varepsilon >0$ there exist $u_\varepsilon \in X$ such that
$$
\Phi (u_\varepsilon) \leqslant \inf _X \Phi + \varepsilon,
$$
and
$$
\Phi (u_\varepsilon) < \Phi (u) + \varepsilon d(u,u_\varepsilon), \quad \text { for all } u\in X \text { with } u\neq u_\varepsilon.
$$
\end{theorem}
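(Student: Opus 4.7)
My plan is to establish Ekeland's principle via the classical iterative construction built from a partial order on $X$ induced by $\Phi$ and the metric: the desired point $u_\varepsilon$ will be the limit of a Cauchy sequence of successive near-minimizers along this order.

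First I would introduce the relation $v \preceq u$ defined by $\Phi(v)+\varepsilon d(u,v) \leqslant \Phi(u)$ and verify that it is a partial order on $X$. Reflexivity is trivial; transitivity follows by summing the two defining inequalities and invoking the triangle inequality $d(u,w)\leqslant d(u,v)+d(v,w)$; antisymmetry is obtained by summing the two opposite relations, which forces $d(u,v)=0$. Next, using that $\Phi$ is bounded below, pick $u_0\in X$ with $\Phi(u_0)\leqslant \inf_X \Phi+\varepsilon$, and inductively choose $u_{n+1}$ in the nonempty set $S_n=\{v:v\preceq u_n\}$ so that $\Phi(u_{n+1})\leqslant \inf_{S_n}\Phi + 2^{-(n+1)}$.

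By construction $\{\Phi(u_n)\}$ is nonincreasing and bounded below by $\inf_X\Phi$, hence convergent. For $m\geqslant n$, transitivity yields $u_m\preceq u_n$ and therefore $\varepsilon\, d(u_n,u_m)\leqslant \Phi(u_n)-\Phi(u_m)$, so $\{u_n\}$ is Cauchy. Completeness of $X$ gives a limit $u_\varepsilon$, and lower semicontinuity yields $\Phi(u_\varepsilon)\leqslant \liminf_n \Phi(u_n)\leqslant \Phi(u_0)\leqslant \inf_X\Phi+\varepsilon$, which is the first conclusion. To obtain the second one, I would argue by contradiction: assume there exists $u\neq u_\varepsilon$ with $\Phi(u)+\varepsilon\, d(u,u_\varepsilon)\leqslant \Phi(u_\varepsilon)$, that is $u\preceq u_\varepsilon$. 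Passing to the limit $m\to\infty$ in $u_m\preceq u_n$ and using lower semicontinuity to preserve the inequality gives $u_\varepsilon \preceq u_n$ for every $n$, hence $u\preceq u_n$ by transitivity, so $u\in S_n$ and $\Phi(u)\geqslant \Phi(u_{n+1})-2^{-(n+1)}$. Combining this with $u\preceq u_{n+1}$ yields $\varepsilon\, d(u,u_{n+1})\leqslant \Phi(u_{n+1})-\Phi(u)\leqslant 2^{-(n+1)}$; letting $n\to\infty$ forces $u=u_\varepsilon$, the desired contradiction.

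The main obstacle is the transfer of the relation $\preceq$ to the limit in the last step: one cannot appeal to continuity of $\Phi$ along $\{u_n\}$, only to lower semicontinuity, so the inequality $\Phi(u_m)+\varepsilon\, d(u_n,u_m)\leqslant \Phi(u_n)$ has to be preserved by taking $\liminf$ on the left-hand side. This is precisely where the standing hypothesis of lower semicontinuity of $\Phi$ is used, and it is the only place in the argument that is not purely order-theoretic.
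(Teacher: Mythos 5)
The paper does not prove this statement at all: it is quoted verbatim as a known result with a citation to the literature (Ekeland's variational principle, weak form), so there is no in-paper argument to compare yours against. Judged on its own, your proposal is the standard and correct order-theoretic proof: the relation $v \preceq u \iff \Phi(v)+\varepsilon d(u,v)\leqslant \Phi(u)$, the iterative choice of near-minimizers $u_{n+1}$ in $S_n=\{v: v\preceq u_n\}$, the Cauchy estimate $\varepsilon d(u_n,u_m)\leqslant \Phi(u_n)-\Phi(u_m)$, and the passage of $\preceq$ to the limit via lower semicontinuity of $\Phi$ together with continuity of $d(u_n,\cdot)$ are all in order, and you correctly identify the one non-order-theoretic step where lower semicontinuity is genuinely needed. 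Two harmless remarks: the statement implicitly assumes $\Phi$ is proper (otherwise $\inf_X\Phi=+\infty$ and the choice of $u_0$ with $\Phi(u_0)\leqslant\inf_X\Phi+\varepsilon$ finite degenerates), and antisymmetry of $\preceq$, while true where $\Phi$ is finite, is never actually used in your argument --- only reflexivity and transitivity are. Your write-up is a proof of a cited black-box theorem, which is more than the paper supplies.
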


We have the following auxiliary

\begin{lemma} \label{30}
\begin{enumerate}[(i)]
\item \label{10} There is $\Lambda >0$ so that for any $\lambda \in\left(0, \Lambda \right)$ there exist $\rho, a>0$ such that $\Phi_{\lambda}(u) \geqslant a$ for any $u \in\wma$ with $\Vert  u\Vert  _{\ma , 1}=\rho$.
\item \label{11}  There exists $\xi \in \wma$ such that $\xi \geqslant 0$, $\xi \neq 0$ and $\Phi_{\lambda}(t \xi)<0$, for $t>0$ small enough.
\end{enumerate}
\end{lemma}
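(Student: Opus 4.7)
The plan for \eqref{10} is to combine the continuous embedding $\wma\hookrightarrow L^{r(\cdot)}(\Omega)$ (available because $r^+<(\ma^\ast)^-$) with the elementary pointwise bound $a^{p(x)}+a^{q(x)}\geq a^{\ma(x)}$ for $a\geq 0$---one of the two exponents equals $\ma(x)$ and the other term is nonnegative. Applying this to $|\nabla u|$ and $|u|$, and using $1/p\geq 1/p^+$, $\beta_1\geq\beta_1^-$, etc., should give a constant $C_3>0$ for which the kinetic part of $\Phi_{\lambda}$ dominates $C_3\,\rho_{\ma,\beta_1}(u)$. Under the assumption $\|u\|_{\ma,1}=\rho<1$, the modular-norm inequality of the unnumbered proposition in Section~2 then yields $\rho_{\ma,\beta_1}(u)\geq\rho^{\ma^+}$, while the embedding gives $\int_\Omega|u|^r\,\dx\leq (C_\ast\rho)^{r^-}$ as soon as $C_\ast\rho<1$. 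Collecting these,
\[
\Phi_{\lambda}(u)\;\geq\;C_3\rho^{\ma^+}\;-\;\lambda\,\frac{\alpha^+ C_\ast^{r^-}}{r^-}\,\rho^{r^-}\;=\;\rho^{r^-}\bigl(C_3\rho^{\ma^+-r^-}-\lambda C_4\bigr).
\]
Because $r^-<\min\{p^-,q^-\}\leq\ma^+$, the exponent $\ma^+-r^-$ is strictly positive, so fixing any small $\rho_0\in(0,1/C_\ast)$ and defining $\Lambda:=C_3\rho_0^{\ma^+-r^-}/(2C_4)$ will produce a uniform positive lower bound $a:=C_3\rho_0^{\ma^+}/2>0$ for $\Phi_{\lambda}$ on the sphere $\|u\|_{\ma,1}=\rho_0$, for every $\lambda\in(0,\Lambda)$.

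For \eqref{11}, I would exploit that $r\in C(\bar\Omega)$ and $r^-<\min\{p^-,q^-\}$ to pick $x_0\in\Omega$ and $\delta>0$ with $\overline{B_{2\delta}(x_0)}\subset\Omega$ on which $r(x)\leq r_0<\min\{p^-,q^-\}$, and then take $\xi\in C^\infty_0(\Omega)$ with $0\leq\xi\leq 1$, $\spt\xi\subset B_{2\delta}(x_0)$, and $\xi\equiv 1$ on $B_\delta(x_0)$; this makes the boundary contributions to $\Phi_{\lambda}(t\xi)$ vanish. For $t\in(0,1)$, the bounds $t^{p(x)}\leq t^{p^-}$ and $t^{q(x)}\leq t^{q^-}$ combined with $\xi\in C^\infty_0$ produce an upper estimate on the kinetic part of the shape $C_5(t^{p^-}+t^{q^-})$, while on $B_\delta(x_0)$ the identities $\xi\equiv 1$ and $r(x)\leq r_0$ give $t^{r(x)}\geq t^{r_0}$ and hence $\lambda\int_\Omega\frac{\alpha}{r}|t\xi|^r\,\dx\geq\lambda C_6\,t^{r_0}$ for some $C_6>0$. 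Consequently
\[
\Phi_{\lambda}(t\xi)\;\leq\;t^{r_0}\bigl(C_5(t^{p^--r_0}+t^{q^--r_0})-\lambda C_6\bigr),
\]
and since $p^--r_0,\,q^--r_0>0$, the parenthesized quantity tends to $-\lambda C_6<0$ as $t\to 0^+$, delivering the required strict negativity for $t$ small enough.

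The only real delicacy lies in the step for \eqref{10} that converts the mixed $(p,q)$-kinetic integral into a single multiple of $\rho_{\ma,\beta_1}(u)$ compatible with the ambient norm $\|\cdot\|_{\ma,1}$: it needs the pointwise inequality $a^{p(x)}+a^{q(x)}\geq a^{\ma(x)}$ applied separately to $|\nabla u|$ and $|u|$, plus a careful bookkeeping of the constants $p^\pm,q^\pm,\beta_j^\pm$. Once that reduction is in place, both parts collapse to a plain comparison of powers---$\rho^{\ma^+}$ versus $\rho^{r^-}$ in \eqref{10}, and $t^{\min\{p^-,q^-\}}$ versus $t^{r_0}$ in \eqref{11}---using the strict inequalities $r^-<\ma^+$ and $r_0<\min\{p^-,q^-\}$ respectively.
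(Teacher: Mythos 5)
Your proposal is correct and follows essentially the same route as the paper: part \eqref{10} is the identical chain of the pointwise bound $|\nabla u|^{p}+|\nabla u|^{q}\geqslant|\nabla u|^{\ma}$, the modular--norm inequality, and the embedding constant $C_\ast$ from \eqref{12}, leading to the same comparison of $\rho^{\ma^+}$ against $\rho^{r^-}$; part \eqref{11} rests on the same continuity argument producing a region where $r<r_0<\min\{p^-,q^-\}$ and the same comparison of powers of $t$. The only cosmetic difference is the choice of test function in \eqref{11}: the paper takes $\xi\equiv 1$ (gradient terms vanish, boundary terms survive) whereas you take a compactly supported bump (boundary terms vanish, gradient terms survive), and both work for the same reason.
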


\begin{proof}
 
\eqref{10} Fix $\rho \in (0, \min \{C_\ast ^{-1}, 1\})$, where $C_\ast$ is given by \eqref{12}. Hence, if $u \in \wma$ with $\Vert  u \Vert  _{\ma  , 1} = \rho$, we have $\Vert u \Vert _{L^{r(\cdot)} (\Omega)} \leqslant C_\ast \rho<1$ and
\begin{equation}\label{42}
\begin{split}
&\Phi_{\lambda}(u)  \geqslant \frac{\min \{1, \beta _1 ^- , \beta _2 ^-\}}{\max \{ p^{+} , q^+\}}\left(\int_{\Omega}\vert    \nabla u\vert    ^{\ma} \dx+\int_{\partial \Omega} \vert    u\vert    ^{\ma} \ds\right)-\frac{\lambda \alpha ^+}{r^-} \int_{\Omega}\vert    u\vert    ^{r} \dx \\
& \geqslant  \frac{\min \{1, \beta _1 ^- , \beta _2 ^-\}}{\max \{ p^{+} , q^+\}} \Vert  u\Vert   _{\ma  , 1} ^{\ma ^{+}}-\frac{\lambda \alpha ^+}{r^- } C_{\ast}^{r^- }\Vert  u\Vert  _{\ma , 1}^{r^- } 
=\rho ^{r^- } \left( \frac{\min \{1, \beta _1 ^- , \beta _2 ^-\}}{\max \{ p^{+} , q^+\}}\rho ^{\ma ^{+} - r^- }-\frac{\lambda \alpha ^+  C_{\ast}^{r^-}}{r^-} \right).
\end{split}
\end{equation}
Choose 
$$
\Lambda =\frac{\min \{1, \beta _1 ^- , \beta _2 ^-\} r^-  \rho ^{2\left(\ma ^{+} - r^- \right)}}{\max \{ p^{+} , q^+\}  \alpha ^+  C_{\ast}^{r ^- }}.
$$
Then, if $\lambda \in (0, \Lambda)$:
$$
\Phi_{\lambda}(u) \geqslant \frac{\rho ^{\ma ^+}\min \{1, \beta _1 ^- , \beta _2 ^-\}(1-\rho^{\ma ^+ -r^- })}{\max \{ p^{+} , q^+\} }>0 .
$$
This completes the proof.\\


\eqref{11} Hence $r^- < \min \{p^- , q^-\}$, there exists $\varepsilon >0$ such that
$$
r^- + \varepsilon <  \min \{p^- , q^-\}.
$$
Since $r\in C(\bar{\Omega})$, there is an open set $U\subset \Omega$ for which 
$$
\vert r - r ^-  \vert <\varepsilon, \quad \text { in }  U.
$$
Thus, 
$$
r< r^- +\varepsilon < \min \{p^- , q^-\}, \quad \text { in } U .
$$

 Take $\xi = 1$. For all $t \in ( 0,1 )$, we obtain
$$
\begin{aligned}
&\Phi_{\lambda}(t \xi) \leqslant  \frac{     \beta _1 ^+ \vert    \partial \Omega\vert }{p^-} t^{p^-} +   \frac{   \beta _2 ^+ \vert    \partial \Omega\vert  }{q^-} t^{q^-}-  \frac{  \lambda   \alpha ^-  \vert    U\vert }{r^+} t^{r^- +\varepsilon} \\
&= \left( \frac{    \beta _1 ^+ \vert    \partial \Omega\vert }{p^- }  t^{p^- -r^- - \varepsilon} + \frac{    \beta _2 ^+ \vert    \partial \Omega\vert  }{q^-} t^{q^- -r^-  -\varepsilon}-  \frac{    \lambda  \alpha ^- \vert   U \vert   }{r^+} \right)  t^{r^- + \varepsilon}.
\end{aligned}
$$
Then, for any $t>0$ small enough, we have
$$
\Phi_{\lambda}(t \xi)<0 .
$$
The proof is complete.

\end{proof}

\noindent \textbf{Proof of Theorem \ref{52} \eqref{4}.} By Lemma \ref{30} \eqref{10}, we have
$$
\inf _{\partial B_{\rho}} \Phi_{\lambda}>0,
$$
where $\partial B_{\rho}=\left\{u \in \wma \: \vert \: \Vert  u\Vert   _{\ma  , 1} =\rho\right\}$ and $B_\rho = \{u\in \wma \: \vert \: \Vert u \Vert _{\ma , 1} <\rho\}$.

Using the estimate \eqref{42}, it follows that
$$
\Phi_{\lambda}(u) \geqslant -\frac{\lambda \alpha ^+ (C_\ast \rho )^{r^-}}{r^-}  \quad \text { for } u \in B_{\rho}.
$$
Hence, by Lemma \ref{30} \eqref{11},
$$
-\infty< \inf _{\overline{B_{\rho}}} \Phi_{\lambda} <0.
$$

Let
$$
0<\varepsilon<\inf _{\partial B_{\rho}} \Phi_{\lambda}-\inf _{\overline{B_{\rho}}} \Phi_{\lambda}.
$$
Then, by applying Ekeland's variational principle to the functional
$$
\Phi_{\lambda}: \overline{B_{\rho}} \rightarrow \mathbb{R},
$$
there exists $u_{\varepsilon} \in \overline{B_{\rho}}$ such that
$$
\begin{aligned}
&\Phi_{\lambda}\left(u_{\varepsilon}\right) \leqslant \inf _{\overline{B_{\rho}}} \Phi_{\lambda}+\varepsilon ,\\
&\Phi_{\lambda}\left(u_{\varepsilon}\right)<\Phi_{\lambda}(u)+\varepsilon\left\Vert  u-u_{\varepsilon}\right\Vert   _{\ma  , 1},  \quad \text { for all } u\in \overline{B_\rho} \text { with } u \neq u_{\varepsilon}.
\end{aligned}
$$
Since $\Phi_{\lambda}\left(u_{\varepsilon}\right)\leqslant \inf _{\overline{B_{\rho}}} \Phi_{\lambda}+\varepsilon<\inf _{\partial B_{\rho}} \Phi_{\lambda}$, we deduce $u_{\varepsilon} \in B_{\rho}$. Then, for $t>0$ small enough and $v \in B_{1}$, we have
$$
\frac{\Phi_{\lambda}\left(u_{\varepsilon}+t v\right)-\Phi_{\lambda}\left(u_{\varepsilon}\right)}{t}+\varepsilon\Vert  v\Vert  _{\ma  , 1} > 0.
$$
 As $t \rightarrow 0^+$, we obtain
$$
\left\langle\Phi_{\lambda}^{\prime}\left(u_{\varepsilon}\right), v\right\rangle+\varepsilon\Vert  v\Vert  _{\ma  , 1} \geqslant 0, \quad \text { for all } v \in B_{1} .
$$
Hence, $\left\Vert  \Phi_{\lambda}^{\prime}\left(u_{\varepsilon}\right)\right\Vert  _{\wma^\ast } \leqslant \varepsilon$. We deduce that there exists a sequence $\{u_k \} \subset B_{\rho}$ such that
\begin{equation}\label{13}
\Phi_{\lambda} (u_k ) \rightarrow \inf _{\overline{B_{\rho}}} \Phi_{\lambda}  \quad \text { and } \quad \Phi_{\lambda}^{\prime}\left(u_{k}\right) \rightarrow 0 \text { in } \wma ^\ast.
\end{equation}
As in the proof of Lemma \ref{32} \eqref{7}, for a subsequence, we obtain $u_k \rightarrow u$ in $\wma$. Thus, by \eqref{13} we have
$$
\Phi_{\lambda}(u)=   \inf _{\overline{B_{\rho}}} \Phi_{\lambda}  <0 \quad \text { and } \quad \Phi_{\lambda}^{\prime}(u)=0 .
$$
So finishes up the proof. 
\begin{flushright}
$\square$
\end{flushright}


\subsection{The case $\ma^{+}< r^- \leqslant r^+ <(\ma ^\ast) ^- $}  \label{37}

We will prove the Theorem  \ref{52} \eqref{5} using the Mountain Pass Theorem (see  \cite{yan2013}):

\begin{theorem} (Ambrosetti-Rabinowitz). Let $I\in C^1  (X , \mathbb{R})$ be satisfying the (PS) condition  on the real Banach space $X$. Let $u_0, u_1 \in X$, $c_0 \in \mathbb{R}$ and $R > 0$ be such that
\begin{enumerate}[(i)]
\item $\Vert u_1 - u_0 \Vert >R$,
\item $\max\{I(u_0), I(u_1)\} < c_0 \leqslant I(v)$ for all $v$ such that $\Vert v - u_0\Vert = R$.
\end{enumerate}
Then $I$ has a critical point $u$ with $I(u) = c$, $c \geqslant c_0$; the critical value $c$ is defined by
$$
c = \inf _{p\in K} \sup _{t\in [0,1]} I\left(p(t)\right),
$$
where $K$ denotes the set of all continuous maps $p: [0, 1] \rightarrow X$ with $p(0) = u_0$ and $p(1) = u_1$.
\end{theorem}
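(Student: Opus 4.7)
The plan is to prove the abstract mountain pass theorem by the classical deformation argument, arguing by contradiction. First I would establish that $c$ is a well-defined real number with $c \geqslant c_{0}$. Any $p \in K$ is continuous with $p(0) = u_{0}$ and $p(1) = u_{1}$, and since $\|u_{1} - u_{0}\| > R$ while $\|p(0) - u_{0}\| = 0$, the intermediate value theorem applied to $t \mapsto \|p(t) - u_{0}\|$ yields $t^{\ast} \in (0,1)$ with $\|p(t^{\ast}) - u_{0}\| = R$. By hypothesis (ii), $I(p(t^{\ast})) \geqslant c_{0}$, hence $\sup_{t} I(p(t)) \geqslant c_{0}$; taking the infimum over $p$ gives $c \geqslant c_{0}$. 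Finiteness of $c$ follows from testing against any fixed continuous path (e.g.\ the segment $t \mapsto (1-t)u_{0} + t u_{1}$), whose image is compact and on which $I$ is bounded.

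Next, assume for contradiction that $K_{c} := \{u \in X : I(u) = c,\; I'(u) = 0\} = \emptyset$. Combined with (PS), this yields a quantitative lower bound: there exist $\delta, \varepsilon_{0} > 0$, with $\varepsilon_{0} < c_{0} - \max\{I(u_{0}), I(u_{1})\}$, such that $\|I'(u)\|_{X^{\ast}} \geqslant \delta$ whenever $|I(u) - c| \leqslant 2\varepsilon_{0}$; otherwise one extracts a sequence $\{w_{k}\}$ with $I(w_{k}) \to c$ and $I'(w_{k}) \to 0$, whose (PS)-limit would lie in $K_{c}$. The heart of the proof is then the deformation lemma: one constructs a locally Lipschitz pseudo-gradient vector field $V$ on $\tilde{X} := \{u : I'(u) \neq 0\}$ with $\|V(u)\| \leqslant 2$ and $\langle I'(u), V(u)\rangle \geqslant \|I'(u)\|_{X^{\ast}}$, multiplies it by a cutoff $\phi$ supported in the strip $\{|I - c| \leqslant 2\varepsilon_{0}\}$ and equal to $1$ on $\{|I - c| \leqslant \varepsilon_{0}\}$, and solves the ODE $\dot{\eta} = -\phi(\eta)V(\eta)$ to obtain a continuous flow $\eta : [0,1] \times X \to X$. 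Along orbits, $I$ is non-increasing, orbits in the strip are pushed downward at rate at least $\delta$, and $\eta_{1}$ is the identity outside $\{|I - c| \leqslant 2\varepsilon_{0}\}$. Choosing $\varepsilon \in (0, \varepsilon_{0})$ small compared to $\delta$, any point with $I \leqslant c + \varepsilon$ satisfies $I(\eta_{1}(\cdot)) \leqslant c - \varepsilon$.

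The contradiction is now immediate: by the definition of $c$, pick $p \in K$ with $\sup_{t} I(p(t)) < c + \varepsilon$, and set $\tilde{p}(t) := \eta_{1}(p(t))$. Since $I(u_{0}), I(u_{1}) < c_{0} - \varepsilon_{0} < c - 2\varepsilon_{0}$, the cutoff $\phi$ vanishes there, so $\eta_{1}(u_{0}) = u_{0}$, $\eta_{1}(u_{1}) = u_{1}$; hence $\tilde{p} \in K$. But $\sup_{t} I(\tilde{p}(t)) \leqslant c - \varepsilon$, contradicting the definition of $c$. Therefore $K_{c} \neq \emptyset$, which is the claim.

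The main obstacle is the construction of the pseudo-gradient vector field and of the associated flow. Because $I \in C^{1}$ but $I'$ is in general not locally Lipschitz, one cannot directly integrate $-I'/\|I'\|$; the standard remedy is Palais' partition-of-unity argument on the paracompact metric space $\tilde{X}$, selecting at each point a suitable ``almost-gradient'' direction and gluing via a locally finite partition of unity. All other ingredients (the Cauchy--Lipschitz existence of the flow, the monotonicity of $I$ along orbits, and the endpoint-fixing property) are then routine. A technically lighter alternative, well suited to this paper since Ekeland's principle is already invoked in Section~\ref{36}, would be to apply Ekeland directly to $J(p) := \max_{t \in [0,1]} I(p(t))$ on the complete metric space $K$ with the uniform distance: this produces an almost-minimizing path from which, by examining points where $J$ is nearly attained, one extracts a Palais--Smale sequence for $I$ at level $c$, whose (PS)-limit is the sought critical point; the subtle step there is the passage from an $\varepsilon$-minimizer of $J$ in $K$ to a near-critical point of $I$ in $X$.
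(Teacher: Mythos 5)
This statement is the classical Ambrosetti--Rabinowitz mountain pass theorem, which the paper does not prove at all: it is imported as a black box from the cited reference and then applied to $\Phi_{\lambda}$. Your proposal, by contrast, actually sketches the standard proof via a quantitative deformation lemma, and the outline is essentially sound: the intermediate value argument giving $c \geqslant c_{0}$, the extraction of a lower bound $\Vert I'(u)\Vert _{X^{\ast}} \geqslant \delta$ on the strip $\{u : \vert I(u)-c\vert \leqslant 2\varepsilon_{0}\}$ from (PS) together with the assumed emptiness of $K_{c}$, the pseudo-gradient flow, and the final deformation of an almost-optimal path are all in the right order and each step is justifiable. Two small points would need attention in a full write-up. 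First, to guarantee that the flow fixes the endpoints you need $I(u_{0}), I(u_{1}) < c - 2\varepsilon_{0}$; your choice $\varepsilon_{0} < c_{0} - \max\{I(u_{0}), I(u_{1})\}$ only yields $I(u_{i}) < c_{0} - \varepsilon_{0} \leqslant c - \varepsilon_{0}$, and the inequality $c_{0} - \varepsilon_{0} < c - 2\varepsilon_{0}$ that you invoke would require $c > c_{0} + \varepsilon_{0}$, which need not hold (one may well have $c = c_{0}$); the fix is simply to impose $2\varepsilon_{0} < c_{0} - \max\{I(u_{0}), I(u_{1})\}$. Second, the normalization $\Vert V(u)\Vert \leqslant 2$ with $\langle I'(u), V(u)\rangle \geqslant \Vert I'(u)\Vert _{X^{\ast}}$ amounts to dividing the usual pseudo-gradient by $\Vert I'(u)\Vert _{X^{\ast}}$, which is merely continuous, so local Lipschitz continuity of the field must be rechecked; it is cleaner to keep the standard pseudo-gradient with $\Vert V\Vert \leqslant 2\Vert I'\Vert$ and $\langle I', V\rangle \geqslant \Vert I'\Vert ^{2}$ and use the bound $\Vert I'\Vert \geqslant \delta$ on the strip. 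Neither issue is structural. The Ekeland-based alternative you mention at the end is also a legitimate, well-known route to the same conclusion.
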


We need the following 

\begin{lemma} \label{33}
\begin{enumerate}[(i)]
\item \label{14}  There exist $\eta, b>0$ such that $\Phi_{\lambda}(u) \geqslant b$ for any $u \in \wma$ with $\Vert  u\Vert  _{\wma}=\eta$.
\item \label{15} There is $\zeta \in \wma$ such that $\Vert \zeta \Vert _{\wma} > \eta$  and $\Phi_{\lambda}(\zeta)<0$, where $\eta$ is given in  \eqref{14}.
\item \label{16} The  functional $\Phi_{\lambda}$ satisfies the condition (PS).
\end{enumerate}
\end{lemma}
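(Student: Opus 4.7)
The three items correspond to the standard mountain pass geometry together with the Palais--Smale condition for a functional that is now superlinear at infinity because $\ma^+ < r^-$. The norms $\Vert \cdot\Vert_\wma$, $\Vert \cdot\Vert_{\ma,1}$ and the modular-type norms in Proposition 2.3 are equivalent on $\wma$, so I will freely move between them, absorbing equivalence constants into generic constants $C_i$.

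For \eqref{14}, fix $\eta \in (0,1)$ to be chosen later and consider $u$ with $\Vert u\Vert_\wma = \eta$; by equivalence of norms we also have $\Vert u\Vert_{\ma,1} \leqslant C \eta \leqslant 1$ after shrinking $\eta$, so Proposition 2.3(ii) gives
$$\int_\Omega |\nabla u|^\ma\, \dx + \int_{\partial\Omega} |u|^\ma\, \ds \geqslant c_1 \Vert u\Vert_\wma^{\ma^+}.$$
Using $r^+ < (\ma^\ast)^-$ and Proposition 2.2, $\wma \hookrightarrow L^{r(\cdot)}(\Omega)$, so $\int_\Omega \alpha |u|^r\, \dx \leqslant c_2 \Vert u\Vert_\wma^{r^-}$ once $\Vert u\Vert_\wma$ is small (passing via the modular bound for the Lebesgue norm). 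Plugging these into $\Phi_\lambda$ yields, analogously to \eqref{42},
$$\Phi_\lambda(u) \geqslant c_3 \Vert u\Vert_\wma^{\ma^+} - c_4 \Vert u\Vert_\wma^{r^-} = \Vert u\Vert_\wma^{\ma^+}\bigl(c_3 - c_4 \Vert u\Vert_\wma^{r^- - \ma^+}\bigr).$$
Since $r^- - \ma^+ > 0$, choosing $\eta$ sufficiently small makes the parenthesis positive, giving (i) with $b = \tfrac{c_3}{2}\eta^{\ma^+}$.

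For \eqref{15}, I pick any nonzero $\xi \in C_c^\infty(\Omega)$ with $\xi \geqslant 0$ and $\int_\Omega \alpha |\xi|^r\, \dx > 0$ (possible since $\alpha^- > 0$), and estimate $\Phi_\lambda(t\xi)$ for $t > 1$. Bounding $t^{p(x)} \leqslant t^{p^+} \leqslant t^{\ma^+}$ and $t^{q(x)} \leqslant t^{\ma^+}$ from above and $t^{r(x)} \geqslant t^{r^-}$ from below, we obtain
$$\Phi_\lambda(t\xi) \leqslant C_5\, t^{\ma^+} - \frac{\lambda}{r^+} t^{r^-} \int_\Omega \alpha |\xi|^r\, \dx.$$
Because $r^- > \ma^+$, the right-hand side tends to $-\infty$ as $t \to \infty$; in particular $\zeta := t_0 \xi$ for $t_0$ large enough satisfies $\Vert \zeta\Vert_\wma > \eta$ and $\Phi_\lambda(\zeta) < 0$.

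For \eqref{16}, let $\{u_k\}$ be a (PS) sequence. The crucial step, and the main technical point of the lemma, is showing boundedness of $\{u_k\}$ in $\wma$; once this is obtained, the rest runs exactly as in the proof of Lemma \ref{32}\eqref{7} (extract a weakly convergent subsequence, use compact embedding into $L^{r(\cdot)}(\Omega)$, show $\limsup \langle L_{p,\beta_1}(u_k)-L_{p,\beta_1}(u)+L_{q,\beta_2}(u_k)-L_{q,\beta_2}(u), u_k-u\rangle \leqslant 0$, split it into two nonnegative pieces and conclude by Proposition \ref{2}). For boundedness I form the combination $\Phi_\lambda(u_k) - \tfrac{1}{r^-}\langle \Phi_\lambda'(u_k), u_k\rangle$, which equals
$$\int_\Omega \Bigl(\tfrac{1}{p}-\tfrac{1}{r^-}\Bigr)|\nabla u_k|^p + \Bigl(\tfrac{1}{q}-\tfrac{1}{r^-}\Bigr)|\nabla u_k|^q\, \dx + \int_{\partial\Omega}\Bigl(\tfrac{\beta_1}{p}-\tfrac{\beta_1}{r^-}\Bigr)|u_k|^p + \Bigl(\tfrac{\beta_2}{q}-\tfrac{\beta_2}{r^-}\Bigr)|u_k|^q\, \ds + \lambda \int_\Omega \alpha\Bigl(\tfrac{1}{r^-}-\tfrac{1}{r}\Bigr)|u_k|^r\, \dx.$$
Since $\ma^+ < r^-$ each coefficient $\tfrac{1}{p}-\tfrac{1}{r^-}$, $\tfrac{1}{q}-\tfrac{1}{r^-}$ is bounded below by $\tfrac{1}{\ma^+}-\tfrac{1}{r^-} > 0$, and $\tfrac{1}{r^-}-\tfrac{1}{r} \geqslant 0$, so the whole right-hand side dominates $c_6 \rho_{\ma,1}(u_k)$ up to the Robin modular terms. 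The left-hand side is at most $M + \tfrac{1}{r^-}\Vert \Phi_\lambda'(u_k)\Vert_{\wma^\ast}\Vert u_k\Vert_\wma \leqslant M + \Vert u_k\Vert_\wma$ for large $k$. Applying Proposition 2.3(iii) (should $\Vert u_k\Vert_{\ma,1} \geqslant 1$) then forces $\Vert u_k\Vert_{\ma,1}^{\ma^-} \lesssim 1 + \Vert u_k\Vert_\wma$, which, together with the equivalence of norms, gives $\{u_k\}$ bounded since $\ma^- > 1$. This completes (iii).
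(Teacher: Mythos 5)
Your proof is correct and follows essentially the same route as the paper: the mountain-pass geometry is obtained from the modular inequalities together with the embedding constant $C_\ast$, and the (PS) condition is obtained by first proving boundedness via an Ambrosetti--Rabinowitz-type combination $\Phi_\lambda(u_k)-\theta^{-1}\langle\Phi_\lambda'(u_k),u_k\rangle$ and then reusing the $(S_+)$ argument of Lemma \ref{32}. The only cosmetic differences are your choice of a compactly supported bump instead of the constant function $\xi=1$ in part (ii) (you let the interior gradient terms carry the $t^{\ma^+}$ growth, the paper lets the Robin boundary terms do it) and the multiplier $1/r^-$ in place of the paper's $1/(\ma^++\varepsilon)$ in part (iii); both variants work precisely because $\ma^+<r^-$.
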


\begin{proof} 

\eqref{14} Assume that $\Vert u \Vert  _{\ma  , 1}  \leqslant \min \{C_\ast ^{-1}, 1\} $, where $C_\ast$ is given  by \eqref{12}. Then
$$
\begin{aligned}
&\Phi_{\lambda}(u) \geqslant  \frac{\min \{1 , \beta _1 ^- , \beta _2 ^-\}}{\max \{p^{+} , q^+ \} }\left(\int_{\Omega}\vert    \nabla u\vert    ^{\ma} \dx + \int_{\partial \Omega} \vert    u\vert    ^{\ma} \ds \right) -\frac{\lambda \alpha ^+ C_\ast ^{r^-}}{r^{-}} \Vert u \Vert _{\ma , 1} ^{r^-} \\
&\geqslant \left(   \frac{\min \{1 , \beta _1 ^- , \beta _2 ^-\}}{\max \{p^{+} , q^+ \} } -\frac{\lambda \alpha ^+ C_\ast ^{r^-}}{r^-} \Vert u \Vert _{\ma , 1} ^{r^- - \ma ^+}   \right)\Vert  u\Vert   _{\ma , 1}^{\ma ^{+}} .
\end{aligned}
$$
Since $\ma ^+ < r^-$, assertion \eqref{14} follows.\\


\eqref{15} Take $\xi = 1$.  For $t>1$, we have
\begin{align*}
&\Phi_{\lambda}(t \xi) \leqslant \frac{\beta _1 ^+ \vert \partial\Omega \vert }{p^-} t^{p ^+} +  \frac{ \beta _2 ^+ \vert \partial \Omega \vert}{q^-} t^{q ^+} - \frac{ \lambda \alpha ^-\vert \Omega \vert}{r^+} t^{r^-} \\
& \leqslant \left( \frac{\beta _1 ^+ \vert \partial\Omega \vert }{p^-} t^{p ^+ - r^-} +  \frac{ \beta _2 ^+ \vert \partial \Omega \vert}{q^-} t^{q ^+  - r^-} - \frac{ \lambda \alpha ^-\vert \Omega \vert}{r^+} \right) t^{r^-} .
\end{align*}
Since $\max \{p^+ , q^+\}<r^-$, for $t>1$ large enough, there is $\zeta= t \xi$ such that $\Vert \zeta \Vert _{\wma} > \eta$ and $\Phi_{\lambda}(\zeta)<0$. This completes the proof.\\


\eqref{16} Let $\{u_{k}\} \subset \wma$ be a sequence such that $\sup _k \Phi_{\lambda}\left(u_k\right)<\infty$ and $\Phi_{\lambda}^{\prime}\left(u_k\right) \rightarrow 0$ in $ \wma ^\ast $. First we prove that $\{u_k\}$ is bounded. We argue by contradiction. Let $\varepsilon >0$ be so that $r^-> \ma ^+ + \varepsilon $, and suppose
$$
\left\Vert  u_k\right\Vert  _{\ma  , 1} \rightarrow \infty , \quad \left( \ma ^+ + \varepsilon\right) \Vert u_k \Vert _{\ma , 1} \geqslant -\langle \Phi _\lambda ^\prime  (u_k) , u_k\rangle , \quad  \text { and }  \quad \left\Vert u_k\right\Vert  _{\ma  , 1} >1  \text { for any } k.
$$
Then
$$
\begin{aligned}
& \sup _h \Phi_{\lambda}\left(u_h\right)+\left\Vert  u_k\right\Vert  _{\ma  , 1} \geqslant  \Phi_{\lambda}\left(u_k\right)-\frac{1}{\ma ^+ + \varepsilon}\left\langle\Phi_{\lambda}^{\prime}\left(u_k\right),u_k\right\rangle \\
&= \int_{\Omega} \left( \frac{1}{p} - \frac{1}{\ma ^+  + \varepsilon} \right)\left\vert \nabla u_k\right\vert ^p + \left( \frac{1}{q} - \frac{1}{\ma ^+  + \varepsilon} \right)\left\vert \nabla u_k\right\vert ^q  \dx \\
& + \int_{\partial \Omega} \left( \frac{1}{p} - \frac{1}{\ma ^+  + \varepsilon} \right) \beta _1\left\vert u_k\right\vert ^p +  \left( \frac{1}{q} - \frac{1}{\ma ^+  + \varepsilon} \right) \beta _2\left\vert u_k\right\vert ^q \ds +\lambda\int_{\Omega}\left(\frac{1}{\ma^+  + \varepsilon}-\frac{1}{r}\right) \alpha \left\vert   u_k\right\vert ^r \dx \\
& \geqslant\left(\frac{1}{\max \{ p^{+} , q^+ \}}-\frac{1}{\ma ^+  + \varepsilon}\right)\min \{1 , \beta _1 ^- ,  \beta _2 ^-\} \left\Vert u_k\right\Vert  _{\ma  , 1} ^{\ma ^{-}}.
\end{aligned}
$$
Since $\max \{ p^{+} , q^+ \} < \ma ^+ + \varepsilon$ and $\ma ^- >1$, we have a contradiction.  So, the sequence $\{ u_k \}$ is bounded in $\wma$ and similar arguments as those used in the proof of Lemma \ref{32} \eqref{7} completes the proof.
\end{proof}

\noindent \textbf{Proof of Theorem \ref{52} \eqref{5}.} From Lemma \ref{33} \eqref{14} and \eqref{15}, we deduce
$$
\max \left\{ \Phi_{\lambda}(0), \Phi_{\lambda}(\zeta) \right\} < b \leqslant \inf _{\Vert  v\Vert _{\wma}  =\eta} \Phi_{\lambda}(v).
$$
By Lemma \ref{33} \eqref{16} and the Mountain Pass Theorem, we deduce that  $\Phi_{\lambda}$  has a critical point $u$ with 
$$
\Phi _\lambda (u)=\inf _{\gamma \in K}    \sup _{t \in[0,1]} \Phi_{\lambda}(\gamma(t)) \geqslant b,
$$
where $K=\{\gamma \in C([0,1], \wma)\:\vert \: \gamma(0)=0 \text { and }\gamma(1)=\zeta\}$. This completes the proof.
\begin{flushright}
$\square$
\end{flushright}


\subsection{The Case $p^+ < q $ and $q>2$}  \label{38}  Let us  recall the following theorem (see \cite{zeidler}).

\begin{theorem} \label{24} (Lagrange multiplier rule)
Let $X$ and $Y$ be real Banach spaces and let $f:D\rightarrow \mathbb{R}$, $G:D\rightarrow Y$ be $C^1$ functions on the open set $D\subset X$. If $u$ is a minimum  point of $f$ on $\{ x \in D \: \vert \: G(x)= 0 \}$, and $G^\prime (u)$ is a surjective operator. Then there exist $y^\ast \in Y^\ast$ such that
$$
f^\prime (u) + y^\ast \circ G^\prime (u) =  0.
$$
\end{theorem}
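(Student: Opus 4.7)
The plan is to show that the continuous linear functional $f'(u)\in X^*$ annihilates the closed subspace $N:=\ker G'(u)$, and then to use the open mapping theorem to produce $y^*\in Y^*$ such that $y^*\circ G'(u)=-f'(u)$.

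First I would prove that $\langle f'(u),h\rangle=0$ for every $h\in N$. The surjectivity of $G'(u):X\to Y$ between Banach spaces together with the open mapping theorem supplies a bounded right inverse $R:Y\to X$ with $G'(u)\circ R=\operatorname{id}_Y$. Given $h\in N$, the Lyusternik--Graves theorem then produces a $C^1$ curve $c:(-\delta,\delta)\to D$ with $c(0)=u$, $c'(0)=h$ and $G(c(t))=0$ for all $|t|<\delta$. The construction is a Newton-type iteration: set $c_0(t)=u+th$ and correct it by $c_{k+1}(t)=c_k(t)-R\,G(c_k(t))$; the contraction mapping theorem guarantees convergence for sufficiently small $t$, and $C^1$ dependence on $t$ follows from the standard implicit function theorem machinery. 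Since $u$ minimises $f$ on $\{G=0\}$, the real-valued function $t\mapsto f(c(t))$ attains a minimum at $0$, whence $0=(f\circ c)'(0)=\langle f'(u),h\rangle$.

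Second, I would deduce the multiplier through a quotient construction. The surjective continuous linear map $G'(u):X\to Y$ induces, again by the open mapping theorem, a topological isomorphism $\bar G:X/N\to Y$. Because $f'(u)$ vanishes on $N$, it factors through the quotient as $f'(u)=\tilde f\circ\pi$, where $\pi:X\to X/N$ is the canonical projection and $\tilde f\in(X/N)^*$. Setting $y^*:=-\tilde f\circ\bar G^{-1}\in Y^*$, one verifies directly that for every $x\in X$,
\begin{align*}
\langle y^*\circ G'(u),x\rangle=-\tilde f\bigl(\bar G^{-1}(G'(u)x)\bigr)=-\tilde f\bigl(\pi(x)\bigr)=-\langle f'(u),x\rangle,
\end{align*}
which yields the desired identity $f'(u)+y^*\circ G'(u)=0$.

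The main technical obstacle is the curve construction in the first step (the Lyusternik--Graves argument): one must check that the iterates $c_k(t)$ stay inside the open set $D$, that they converge uniformly on a small interval in $t$, and that the limit depends on $t$ in a $C^1$ fashion with the prescribed derivative at $0$. Once the tangent-space characterisation is in hand, the remaining steps are pure linear functional analysis, relying only on the open mapping theorem and the factorisation of continuous functionals through quotients by their null spaces.
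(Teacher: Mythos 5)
The paper does not prove Theorem \ref{24} at all: it is quoted verbatim from the cited reference (Zeidler) as a known classical result and then simply applied in Cases \ref{38} and \ref{44}. So there is nothing in the paper to compare your argument against line by line; what you have written is the standard textbook proof of the Banach-space Lagrange multiplier rule, and its two-part structure (first the tangent-space characterisation $\langle f'(u),h\rangle=0$ for $h\in\ker G'(u)$ via a Lyusternik-type corrector iteration, then the factorisation of $f'(u)$ through $X/\ker G'(u)\cong Y$ to produce $y^\ast$) is exactly the argument one finds in the sources the paper cites. The second step is completely correct as written: $\ker G'(u)$ is closed, the induced map $\bar G$ is a continuous linear bijection between Banach spaces and hence a topological isomorphism by the open mapping theorem, and the verification of $f'(u)+y^\ast\circ G'(u)=0$ is immediate.

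One caveat on the first step: the open mapping theorem does \emph{not} in general supply a bounded \emph{linear} right inverse $R$ of $G'(u)$ (that would require $\ker G'(u)$ to be complemented in $X$, which can fail). What it does give is a constant $C$ and, for each $y\in Y$, some $x$ with $G'(u)x=y$ and $\Vert x\Vert\leqslant C\Vert y\Vert$, i.e.\ a bounded homogeneous (generally nonlinear) selection; the Newton-type iteration in Lyusternik--Graves still goes through with such an $R$, but the resulting object is then only a family of points $c(t)=u+th+o(t)$ lying on $\{G=0\}$, not necessarily a $C^1$ curve. That weaker conclusion is all you actually need, since $f(c(t))\geqslant f(u)$ and $f(c(t))=f(u)+t\langle f'(u),h\rangle+o(t)$ already force $\langle f'(u),h\rangle=0$ (using that $-h$ also lies in the kernel). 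So the proof is sound once you drop the claims of linearity of $R$ and of $C^1$ regularity of the curve, neither of which is used in an essential way.
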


Observe that $p^+ <q$ implies $\wma = W^{1,q} (\Omega)$. Define
$$
\mathcal{C}_q := \left\{ u\in W^{1,q} (\Omega)\:\vert \:  \int _{\Omega} \alpha \vert u\vert ^{q-2} u \dx =0 \right\}.
$$
We know from \cite[Theorem 6.2.29]{leszek} that 
\begin{equation*} \label{41}
\sigma _1 :=\inf _{u\in \mathcal{C} _q \backslash \{0\}} \frac{ \int _{\Omega} \vert \nabla u\vert ^q \dx}{\int _{\Omega} \alpha \vert u\vert ^q \dx} >0.
\end{equation*}

Inspired by \cite[Section 2.3.2]{badiale} will be to consider the restriction of $\Phi _{\lambda}$ to the Nehari-type manifold defined by
$$
\mathcal{N}_{\lambda} :=\left\{u \in \mathcal{C} _q\backslash\{0\}  \: \vert \: \left\langle \Phi _\lambda ^{\prime}(u), u\right\rangle=0\right\},
$$
where $\lambda >\sigma_1$. In Lemma \ref{34} below we prove that $\mathcal{N} _\lambda$ is nonempty. We recall that the functional $\Phi _\lambda$ has the following expression 
\begin{equation} \label{17}
\Phi _\lambda (u) = \int_{\Omega} \frac{1}{p}\vert    \nabla u\vert    ^{p} +  \frac{1}{q}\vert    \nabla u\vert    ^{q} \dx  -\lambda \int_{\Omega} \frac{\alpha}{q}\vert    u\vert ^q \dx. 
\end{equation}
Then, by definition,
\begin{equation}\label{39}
 \int_{\Omega}\vert    \nabla u\vert    ^p  +  \vert    \nabla u\vert    ^q \dx  = \lambda \int_{\Omega} \alpha \vert u \vert ^q \dx,
\end{equation}
for all $u\in \mathcal{N} _\lambda$.  Furthermore, for all $u\in \mathcal{N} _\lambda$, we have
\begin{equation} \label{18}
\Phi _\lambda (u)=\int_{\Omega}\frac{q-p}{qp} \vert  \nabla u \vert ^p  \dx.
\end{equation}
Consequently, 
$$
m_{\lambda}:=\inf _{u \in \mathcal{N}_{\lambda}} \Phi_{\lambda}(u)\geqslant 0.
$$

We have the following result needed later.

\begin{lemma} \label{34} 
\begin{enumerate}[(i)]
\item  \label{29} $\mathcal{N}_{\lambda} \neq \varnothing$.
\item \label{19} Every minimizing sequence for $\Phi _{\lambda}$ on $\mathcal{N}_{\lambda}$ is bounded in $W^{1, q}(\Omega)$, \textit{i.e.}, if $\{u_k\} \subset \mathcal{N}_\lambda$ and $ \Phi _\lambda (u_k) \rightarrow m_\lambda$, then  $\sup _{k} \Vert u_k\Vert _{W^{1, q}(\Omega)} <\infty$.  
\item  \label{21} $m_{\lambda}>0$.
\item  \label{25} There exists $u_{\Phi} \in \mathcal{N}_{\lambda}$ such that $\Phi_{\lambda}\left(u_{\Phi}\right)=m_{\lambda}$.
\end{enumerate}
\end{lemma}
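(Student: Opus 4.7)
The plan is to treat (i)--(iv) in order, using throughout the fibering map $\psi_u(t):=\langle \Phi_\lambda'(tu),tu\rangle$ for $u\in\mathcal{C}_q\setminus\{0\}$, which expands as
$$
\psi_u(t)=\int_\Omega t^{p(x)}|\nabla u|^{p(x)}\,\dx+t^q\!\left(\int_\Omega|\nabla u|^q\,\dx-\lambda\int_\Omega\alpha|u|^q\,\dx\right).
$$
For (i), the hypothesis $\lambda>\sigma_1$ supplies $u_0\in\mathcal{C}_q\setminus\{0\}$ making the bracket strictly negative. Since $p^+<q$, the first summand dominates for small $t>0$ (it is bounded below by $t^{p^+}\int_\Omega|\nabla u_0|^{p(x)}\,\dx$) while the bracket dominates for large $t$, so a zero $t_0$ exists by continuity, and $\mathcal{C}_q$ is closed under positive scaling, hence $t_0 u_0\in\mathcal{N}_\lambda$.

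For (ii), suppose by contradiction $t_k:=\Vert u_k\Vert_{W^{1,q}(\Omega)}\to\infty$ along a minimizing sequence and set $v_k:=u_k/t_k\in\mathcal{C}_q$. By reflexivity and Proposition \ref{6}, along a subsequence $v_k\rightharpoonup v$ in $W^{1,q}(\Omega)$ and $v_k\to v$ in $L^q(\Omega)$ with $v\in\mathcal{C}_q$. Dividing \eqref{39} by $t_k^q$ yields
$$
\int_\Omega t_k^{p(x)-q}|\nabla v_k|^{p(x)}\,\dx+\int_\Omega|\nabla v_k|^q\,\dx=\lambda\int_\Omega\alpha|v_k|^q\,\dx.
$$
Identity \eqref{18} combined with $\Phi_\lambda(u_k)\to m_\lambda$ bounds $\int_\Omega|\nabla u_k|^{p(\cdot)}\,\dx$; since $t_k^{p(x)-q}\leq t_k^{p^+-q}\to 0$, the first summand tends to $0$, so $\int_\Omega|\nabla v_k|^q\,\dx\to\lambda\int_\Omega\alpha|v|^q\,\dx$. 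If $v=0$ the right-hand side vanishes while $\Vert\nabla v_k\Vert_{L^q(\Omega)}\to 1$ (because $\Vert v_k\Vert_{W^{1,q}}=1$ and $\Vert v_k\Vert_{L^q}\to 0$), a contradiction, so $v\neq 0$. On the other hand, substituting $u_k=t_k v_k$ in \eqref{18} gives $\Phi_\lambda(u_k)\geq c\,t_k^{p^-}\int_\Omega|\nabla v_k|^{p(x)}\,\dx$ for some $c>0$, forcing $\int_\Omega|\nabla v_k|^{p(x)}\,\dx\to 0$; weak lower semicontinuity of the $p(\cdot)$-modular then gives $\nabla v\equiv 0$, so $v$ is constant and, being in $\mathcal{C}_q$, must be zero, contradicting $v\neq 0$. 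Part (iii) follows by an analogous rescaling: assuming $m_\lambda=0$, the previous reasoning first yields $u_k\to 0$ in $W^{1,q}$, and then one rescales by $s_k:=\Vert\nabla u_k\Vert_{L^q(\Omega)}\to 0$; the limit $v$ of $v_k:=u_k/s_k$ is both nonzero (from the rescaled \eqref{39}, where $\lambda\int\alpha|v_k|^q\geq 1$) and non-constant (since $v\in\mathcal{C}_q\setminus\{0\}$), so $\int_\Omega|\nabla v_k|^{p(x)}\,\dx\geq c'>0$ eventually by weak lower semicontinuity, and the bound $\int_\Omega s_k^{p(x)-q}|\nabla v_k|^{p(x)}\,\dx\geq s_k^{p^+-q}c'\to\infty$ contradicts boundedness of $\lambda\int_\Omega\alpha|v_k|^q\,\dx$.

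For (iv), a minimizing sequence is bounded by (ii), so extract $u_k\rightharpoonup u$ in $W^{1,q}(\Omega)$ with $u_k\to u$ in $L^q(\Omega)$ and $u\in\mathcal{C}_q$. The alternative $u=0$ would force, via \eqref{39}, both $\int_\Omega|\nabla u_k|^{p(\cdot)}\,\dx$ and $\int_\Omega|\nabla u_k|^q\,\dx\to 0$, hence $\Phi_\lambda(u_k)\to 0=m_\lambda$, contradicting (iii); so $u\neq 0$. Weak lower semicontinuity of both modulars together with $\int_\Omega\alpha|u_k|^q\,\dx\to\int_\Omega\alpha|u|^q\,\dx$ gives $\psi_u(1)\leq 0$. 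Strict inequality is impossible: as in (i) it would produce $t_0\in(0,1)$ with $t_0 u\in\mathcal{N}_\lambda$, and then \eqref{18} applied to $t_0 u$ yields
$$
\Phi_\lambda(t_0 u)=\int_\Omega\tfrac{q-p(x)}{qp(x)}\,t_0^{p(x)}|\nabla u|^{p(x)}\,\dx<\int_\Omega\tfrac{q-p(x)}{qp(x)}|\nabla u|^{p(x)}\,\dx\leq m_\lambda,
$$
the last inequality being weak lower semicontinuity applied to the minimizing sequence, which contradicts the definition of $m_\lambda$. Thus $\psi_u(1)=0$, i.e.\ $u\in\mathcal{N}_\lambda$, and the same lsc chain collapses to $\Phi_\lambda(u)=m_\lambda$.

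The main obstacle is the variable-exponent rescaling underlying (ii) and (iii): the factor $t_k^{p(x)-q}$ (or $s_k^{p(x)-q}$) mixes an $x$-dependent power with a divergent (respectively vanishing) scalar, so one must carefully select the extremal exponent ($p^+$ when the rescaling is $\leq 1$, $p^-$ when it is $\geq 1$) and argue at the level of the modular rather than the norm when controlling $\int_\Omega|\nabla v_k|^{p(x)}\,\dx$ from below using the weak limit $v$.
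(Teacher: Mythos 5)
Your parts (i), (ii) and (iv) are correct and essentially follow the paper's route: (i) is the same intermediate–value argument on the fibering equation, (iv) is the same Nehari-projection contradiction, and (ii) differs only in that you normalize by $\Vert u_k\Vert_{W^{1,q}(\Omega)}$ instead of $\Vert u_k\Vert_{L^q(\Omega)}$ (your version works, since the unit sphere of the full norm is automatically bounded and you correctly derive both $v\neq 0$ and $v=0$). The difficulty is part (iii).

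In (iii) you rescale by $s_k=\Vert\nabla u_k\Vert_{L^q(\Omega)}\to 0$ and speak of ``the limit $v$ of $v_k=u_k/s_k$'', but nothing guarantees that $\{v_k\}$ is bounded in $W^{1,q}(\Omega)$: you control $\Vert\nabla v_k\Vert_{L^q(\Omega)}=1$ but not $\Vert v_k\Vert_{L^q(\Omega)}=\Vert u_k\Vert_{L^q(\Omega)}/\Vert\nabla u_k\Vert_{L^q(\Omega)}$. The only available estimate is the rescaled form of \eqref{39},
$$
\int_\Omega s_k^{p-q}\vert\nabla v_k\vert^p\,\dx+1=\lambda\int_\Omega\alpha\vert v_k\vert^q\,\dx ,
$$
which bounds $\Vert v_k\Vert_{L^q(\Omega)}^q$ only in terms of the very quantity $\int_\Omega s_k^{p-q}\vert\nabla v_k\vert^p\,\dx$ that you later show blows up. For the same reason the closing step fails: ``boundedness of $\lambda\int_\Omega\alpha\vert v_k\vert^q\,\dx$'' was never established, and by the identity above it is exactly equivalent to boundedness of the left-hand side. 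So from ``left-hand side $\to\infty$'' you may only conclude $\int_\Omega\alpha\vert v_k\vert^q\,\dx\to\infty$, i.e. $\Vert v_k\Vert_{L^q(\Omega)}\to\infty$, which in turn destroys the extraction of the strong $L^q$-limit $v$ that you used to get $v\neq 0$ and $v$ non-constant. The argument is circular, and the case $\Vert u_k\Vert_{L^q(\Omega)}/\Vert\nabla u_k\Vert_{L^q(\Omega)}\to\infty$ is left untreated.

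The paper sidesteps this by normalizing with $\Vert u_k\Vert_{L^q(\Omega)}$: then $\Vert v_k\Vert_{L^q(\Omega)}=1$ and $\int_\Omega\vert\nabla v_k\vert^q\,\dx<\lambda\alpha^+$ give boundedness for free, while \eqref{39} and $q>p^+$ yield $\int_\Omega\vert\nabla v_k\vert^{p}\,\dx\leqslant\lambda\alpha^+\Vert u_k\Vert_{L^q(\Omega)}^{q-p^+}\to 0$; hence the weak limit is a constant belonging to $\mathcal{C}_q$, so it is zero, contradicting $\Vert v_k\Vert_{L^q(\Omega)}=1$. Your argument can be repaired by splitting into the two cases ($\Vert v_k\Vert_{L^q(\Omega)}$ bounded or not), but the unbounded case requires exactly this $L^q$-normalization, so it is simpler to adopt it from the start.
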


\begin{proof} 

\eqref{29}  Since $\lambda>\inf _{u\in \mathcal{C} _q \backslash \{0\}}\frac{\int _\Omega \vert \nabla u\vert ^q \dx}{\int _\Omega \alpha \vert u\vert ^q \dx}$, there exists  $u \in \mathcal{C} _q \backslash\{0\}$  such that
$$
 \int_{\Omega}\left\vert    \nabla u \right\vert^{q} \dx <\lambda \int_{\Omega}\alpha \left\vert   u \right\vert    ^q \dx.
$$
Hence, there exist $t_1, t_2 \in  \mathbb{R}$ so that $0<t_1<1<t_2$ and
$$
\begin{aligned}
\int _\Omega t _2 ^{p-q} \vert \nabla u \vert^p \dx <  \lambda \int_{\Omega}\alpha \left\vert   u \right\vert    ^q \dx - \int_{\Omega}\left\vert    \nabla u \right\vert^{q} \dx < \int _\Omega t _1 ^{p-q} \vert \nabla u \vert^p \dx.
\end{aligned}
$$
Thus, we conclude that there exists $t \in (t_1 , t_2)$ for which
$$
\int _{\Omega} t ^{p -q}\vert \nabla u\vert ^p \dx =\lambda \int _{\Omega} \alpha  \vert u\vert ^q \dx -  \int _{\Omega}  \vert \nabla u\vert ^q \dx.
$$
Therefore, $tu\in \mathcal{N}_\lambda$.\\


\eqref{19} Let $\{u_k\}$ be a minimizing sequence. We argue by contradiction.  Assume that  $\Vert u_k\Vert _{W^{1,q} (\Omega)} \rightarrow \infty$. Then, by \eqref{17}, it follows that $\int_{\Omega}\left\vert   u_k\right\vert ^{q} \dx \rightarrow \infty$. Set $v_k:=\frac{u_k}{\left\Vert u_k\right\Vert _{L^q (\Omega)}}$. Since $\int_{\Omega}\left\vert    \nabla u_k\right\vert    ^{q} \dx<\lambda \alpha ^+ \int_{\Omega}\left\vert   u_k\right\vert    ^{q} \dx$, we deduce  $\int_{\Omega}\left\vert \nabla v_k \right\vert ^{q} \dx<\lambda \alpha ^+$. Thus, $\{ v_k \}$ is bounded in $W^{1, q}(\Omega)$. From the reflexivity of $W^{1,q} (\Omega)$ and Propositions \ref {6}, it follows that there exist $v_{0}\in W^{1, q}(\Omega)$ and a subsequence (still denoted $\{u_k\}$) such that $v_k \rightharpoonup v_{0}$ in $W^{1, q}(\Omega)$ (hence in $W^{1, p (\cdot)}(\Omega)$ as well),  and $v_k \rightarrow v_{0}$ in $L^{q}(\Omega)$. Hence, by Lebesgue's dominated convergence, $v_{0} \in \mathcal{C} _q$.

By \eqref{18},  we obtain
$$
\int_{\Omega}\left\vert \nabla v_k \right\vert ^{p} \dx \rightarrow 0.
$$
Next, since $v_k \rightharpoonup v_{0}$ in $W^{1, p(\cdot)}(\Omega)$, we infer that
$$
\int_{\Omega}\left\vert    \nabla v_{0}\right\vert    ^{p} \dx \leqslant \liminf _{k \rightarrow \infty} \int_{\Omega}\left\vert \nabla v_k \right\vert ^{p} \dx=0
$$
and consequently $v_{0}$ is a constant function. From $v_{0} \in \mathcal{C} _q$, $v_{0}=0$. It follows that $v_k \rightarrow 0$ in $L^{q}(\Omega)$, which contradicts the fact that $\left\Vert  v_k \right\Vert  _{L^{q}(\Omega)}=1$ for all $k$. Consequently, $\{u_k\}$ must be bounded in $W^{1, q}(\Omega)$.\\


\eqref{21}  Assume by contradiction that $m_{\lambda}=0$. Let $\{u_k\}\subset\mathcal{N}_{\lambda}$ be a minimizing sequence. By  \eqref{19} we know that $\{u_k\} \subset \mathcal{C} _q \backslash \{0\}$ is bounded in $W^{1, q}(\Omega)$. It follows that there exists $u_{0} \in W^{1, q}(\Omega)$ such that (on a subsequence, again denoted $\{u_k\}$) one has $u_k \rightharpoonup u_{0}$ in $W^{1, q}(\Omega)$, and $u_k \rightarrow u_{0}$ in $L^{q}(\Omega)$. Therefore, $u_{0} \in \mathcal{C} _q$ and
$$
\int_{\Omega}\left\vert    \nabla u_{0}\right\vert    ^{p} \dx \leqslant \liminf _{k \rightarrow \infty} \int_{\Omega}\left\vert    \nabla u_k\right\vert ^{p} \dx=0,
$$
because \eqref{18}. Consequently $u_{0}=0$, since $u_0 \in \mathcal{C}_q$.

\noindent Write $v_k:=u_k /\Vert u_k \Vert  _{L^{q}(\Omega)}$. Then, by \eqref{39} and $q>p^+$,
$$
\int _{\Omega} \vert \nabla v_k\vert ^{p} \dx \rightarrow 0.
$$
From $\int_{\Omega}\left\vert \nabla u_k\right\vert    ^{q} \dx<\lambda \alpha ^+ \int_{\Omega}\left\vert   u_k\right\vert    ^{q} \dx$,  we have  $\{v_k\} \subset \mathcal{C} _q$ is bounded in $W^{1, q}(\Omega)$. It follows that, for a subsequence still denoted $\{u_k\}$, there exists $v_{0} \in \mathcal{C} _q$ such that $v_k \rightharpoonup v_{0}$ in $W^{1, q}(\Omega)$ and $v_k \rightarrow v_{0}$ in $L^{q}(\Omega)$. Next,  we see
$$
\int_{\Omega}\left\vert    \nabla v_{0}\right\vert    ^{p} \dx \leqslant \liminf _{k \rightarrow \infty} \int_{\Omega}\left\vert \nabla v_k  \right\vert    ^{p} \dx=0,
$$
and consequently $v_{0}$ is a constant function. In fact, $v_{0}=0$. Thus, $v_k \rightarrow 0$ in $L^{q}(\Omega)$, which contradicts the fact that $\left\Vert  v_k \right\Vert  _{L^{q}(\Omega)}=1$ for all $k$. Consequently, $m_{\lambda}$ is positive, as asserted.\\


\eqref{25}  Let $\{u_k\} \subset \mathcal{N}_{\lambda}$ be a minimizing sequence. By  \eqref{19}, $\{u_k\}$ is bounded in $W^{1, q}(\Omega)$. Thus, there exists $u _{\Phi} \in \mathcal{C} _q$ such that   (on a subsequence, again denoted $\{u_k\}$) one has $u_k \rightharpoonup u_{\Phi}$ in $W^{1, q}(\Omega)$, and $u_k \rightarrow u_{\Phi}$ in $L^{q}(\Omega)$. We deduce 
\begin{equation}\label{23}
\Phi _{\lambda}\left(u_{\Phi}\right) \leqslant \liminf _{k \rightarrow \infty} \Phi _{\lambda}\left(u_{k}\right)=m_{\lambda} .
\end{equation}
If $u _{\Phi}=0$,  arguing as in the proof of  \eqref{21}, we are led to a contradiction. Consequently $u_\Phi  \in \mathcal{C} _q\backslash\{0\}$. Now, from \eqref{39}, we deduce
$$
\int_{\Omega}\vert    \nabla u_{\Phi} \vert    ^p  +  \vert    \nabla u_{\Phi} \vert    ^q \dx   \leqslant \lambda \int_{\Omega} \alpha \left \vert u_{\Phi}\right \vert ^q \dx.
$$
If we have equality here, then $u _{\Phi} \in \mathcal{N}_{\lambda}$, and everything is done. Assume the contrary, \textit{i.e.},
\begin{equation}\label{22}
\int_{\Omega}\left\vert    \nabla u_{\Phi} \right\vert    ^p  +  \left\vert    \nabla u_{\Phi} \right\vert    ^q \dx   < \lambda \int_{\Omega} \alpha \left\vert u_{\Phi} \right\vert ^q\dx.
\end{equation}
Let $t>0$ be such that $t u_{\Phi} \in \mathcal{N}_{\lambda}$ (see the proof of \eqref{29}). From this, \eqref{22} and our condition $p^+<q$, one can infer that $t \in(0,1)$. Finally,  by \eqref{18} and \eqref{23},  we have
$$
0<m_{\lambda} \leqslant \Phi _{\lambda}\left(t u_{\Phi}\right) \leqslant t^{p^-} \liminf _{k \rightarrow \infty} \Phi _{\lambda}\left(u_{k}\right)=t^{p^-} m_{\lambda}<m_{\lambda},
$$
which is impossible. Hence, relation \eqref{22} cannot be valid, and consequently we must have $u_{\Phi} \in \mathcal{N}_ \lambda$. Therefore, by \eqref{23},  $\Phi _{\lambda}\left(u_{\Phi}\right)=m_{\lambda}$.

\end{proof}

\noindent \textbf{Proof of Theorem \ref{53} \eqref{28}.} \textit{Steep 1.} Let $u_\Phi \in \mathcal{N}_{\lambda} $ be the minimizer found in Lemma \ref{34} \eqref{25}. In fact $u_\Phi$ is a solution of the minimization problem $\min _{u \in W^{1,q} (\Omega) \backslash\{0\}} \Phi _{\lambda}(u)$, under restrictions
\begin{align}
&G_{1}(u):=\int_{\Omega} \vert \nabla u\vert ^p  + \vert \nabla u\vert ^q \dx  -\lambda \int_{\Omega} \alpha \vert u\vert ^{q} \dx=0, \label{26} \\
&G_{2}(u):=\int_{\Omega}\alpha \vert  u\vert  ^{q-2} u \dx = 0. \label{27}
\end{align}
We choose $X=W^{1,q} (\Omega)$, $Y=\mathbb{R}^{2}$, $D=W^{1,q} (\Omega) \backslash\{0\}$, $f=\Phi _{\lambda}$, $G=\left(G_{1}, G_{2}\right)$. Obviously, the dual $Y^\ast$ can be identified with $\mathbb{R}^{2}$. All the conditions from the statement of Theorem \ref{24} are met, including the surjectivity condition on $G^{\prime}(u_\Phi)$, which means that for any pair $\left(\zeta_{1}, \zeta_{2}\right) \in \mathbb{R}^{2}$, there is a $w \in W^{1,q} (\Omega)$ such that $\left\langle G_{1}^{\prime}(u_\Phi ), w\right\rangle=\zeta_{1}$, $\left\langle G_{2}^{\prime}(u_\Phi), w\right\rangle=\zeta_{2}$. Indeed, choosing $w=a u_\Phi+b$ with $a, b \in \mathbb{R}$, we obtain 
$$
\left\{
\begin{aligned}
a  \int_{\Omega}p \vert    \nabla u_\Phi \vert ^p  +  q\vert \nabla u_\Phi \vert ^q \dx   - \lambda a q \int_{\Omega} \alpha \vert  u_\Phi \vert    ^{q} \dx &=\zeta_{1}, \\
b(q-1) \int_{\Omega}\alpha  \vert  u_\Phi \vert ^{q-2} \dx &=\zeta_{2},
\end{aligned}
\right.
$$
which yields
$$
a  \int_{\Omega}(p-q) \vert \nabla u _\Phi \vert ^p  \dx  =\zeta_{1}, \quad b(q-1) \int_{\Omega} \alpha \vert    u_\Phi\vert    ^{q-2} \dx = \zeta_{2} .
$$
Thus, $a$ and $b$ can be uniquely determined, hence $G^{\prime}(u_\Phi)$ is surjective, as asserted. Consequently, Theorem \ref{24} is applicable to our minimization problem. Specifically, there exist some constants $c, d \in \mathbb{R}$ such that
$$
\begin{aligned}
&\left[\int_{\Omega}\left( \vert   \nabla u_\Phi \vert    ^{p-2} + \vert \nabla u_\Phi \vert ^{q-2} \right)  \nabla u_\Phi   \nabla v   \dx   - \lambda \int_{\Omega} \alpha \vert  u_\Phi \vert    ^{q-2} u_\Phi  v \dx\right] \\
&+c\left[\int_{\Omega}\left( p\vert   \nabla u_\Phi \vert    ^{p-2} + q\vert \nabla u_\Phi \vert ^{q-2} \right)  \nabla u_\Phi   \nabla v   \dx   - \lambda q\int_{\Omega} \alpha \vert  u_\Phi \vert    ^{q-2} u_\Phi v \dx\right]
+d(q-1) \int_{\Omega} \alpha  \vert  u_\Phi \vert ^{q-2} v \dx=0, 
\end{aligned}
$$
for all  $v \in W^{1, q}(\Omega)$.  Testing with $v=1$ above, in view of \eqref{27}, yields
$$
d(q-1) \int_{\Omega}\alpha \vert    u_\Phi \vert    ^{q-2} \dx=0,
$$
then $d=0$. Next, testing with $v =u_\Phi$  and using \eqref{26}, we deduce
$$
c \int_{\Omega}(p-q)\vert \nabla u_\Phi \vert ^{p} \dx  =0,
$$
which implies $c=0$. Therefore, for all $v \in W^{1, q}(\Omega)$,
$$
\int_{\Omega}\left( \vert   \nabla u_\Phi \vert    ^{p-2} + \vert \nabla u_\Phi \vert ^{q-2} \right)  \nabla u_\Phi   \nabla v   \dx   - \lambda \int_{\Omega} \alpha \vert  u_\Phi \vert    ^{q-2} u_\Phi v \dx =0,
$$
\textit{i.e.}, $\lambda$ is an eigenvalue of problem \eqref{1}.\\

\textit{Steep 2.} Note that if $\lambda >0$ is an eigenvalue and $u$ is a related eigenfunction  of \eqref{1}, then testing with $v=1$ in \eqref{51}, we deduce $\int _\Omega \vert u \vert ^{q-2} u\dx =0$. Hence, $u\in \mathcal{C}_q \backslash \{0\}$. Next we prove that any  $\lambda \in (0, \sigma _1]$ is not an eigenvalue of \eqref{1}.  We argue by contradiction. Then there exist $\lambda \in (0, \sigma _1]$ and $u\in \mathcal{C}_q \backslash \{0\}$ such that
$$
\int_{\Omega} \vert    \nabla u\vert    ^p + \vert    \nabla u\vert ^q \dx = \lambda \int_{\Omega}\alpha \vert    u\vert    ^q \dx,
$$ 
Hence,
$$
0\leqslant  (\sigma _1 - \lambda)\int _\Omega  \alpha \vert u \vert ^q\dx \leqslant \int _\Omega \vert    \nabla u\vert ^q \dx - \lambda \int_{\Omega}\alpha \vert    u\vert    ^q \dx < \int_{\Omega} \vert    \nabla u\vert    ^p + \vert    \nabla u\vert ^q \dx - \lambda \int_{\Omega}\alpha \vert    u\vert    ^q \dx =0.
$$
This is a contradiction.
\begin{flushright}
$\square$
\end{flushright}

\subsection{The Case $q < p^- $ and $q>2$}  \label{44} We have that $q<p^-$ implies $\wma = W^{1,p(\cdot)} (\Omega)$.  Define
$$
\mathcal{C}:= \left\{ u\in W^{1,p(\cdot)} (\Omega)\:\vert \:  \int _{\Omega} \alpha \vert u\vert ^{q-2} u \dx =0 \right\}.
$$
Note that $\mathcal{C} \subset \mathcal{C}_q$, then 
\begin{equation} \label{45}
\sigma _2 = \inf _{u\in \mathcal{C}  \backslash \{0\}} \frac{ \int _{\Omega} \vert \nabla u\vert ^q \dx}{\int _{\Omega} \alpha \vert u\vert ^q \dx} >0.
\end{equation}

We need the following result.

\begin{theorem}(see \cite[Theorem 1.38]{yan2013}). \label{50}
Let $X$ be a reflexive Banach space and $I : C \subset X \rightarrow \mathbb{R}$ be weakly lower semicontinuous and assume

\begin{enumerate}[(i)]
\item $C$ is a nonempty bounded weakly closed set in $X$ or
\item $C$ is a nonempty weakly closed set in $X$ and $I$ is weakly coercive on $C$.
\end{enumerate}
Then
\begin{enumerate}[(a)]
\item $\inf _{u\in C} I(u) >-\infty$;
\item there is at least one $u_0\in C$ such that $I(u_0) = \inf _{u\in C} I(u)$.
\end{enumerate}
Moreover, if $u_0$ is an interior point of $C$ and $I$ is Gateaux differentiable at $u_0$, then $I^\prime (u_0) = 0$.
\end{theorem}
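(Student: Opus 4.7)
This is the abstract direct method of the calculus of variations, so the strategy is textbook: produce a minimizing sequence, extract a weak cluster point via reflexivity, keep it inside $C$ using weak closedness, and identify it as a minimizer via weak lower semicontinuity. The Euler equation at an interior minimizer then falls out of Gateaux differentiability. I will organize the argument in four short steps.

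\textbf{Step 1 (minimizing sequence and boundedness).} Set $m:=\inf_{u\in C}I(u)\in[-\infty,+\infty)$ and choose $\{u_k\}\subset C$ with $I(u_k)\to m$. In case (i), $\{u_k\}$ is bounded because $C$ is. In case (ii), weak coercivity means $I(u)\to+\infty$ along any sequence in $C$ with $\|u\|\to\infty$; so if $\{u_k\}$ were unbounded, a subsequence would force $I(u_k)\to+\infty$, contradicting $I(u_k)\to m$ (this also rules out $m=-\infty$ a posteriori, completing (a) once (b) is established).

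\textbf{Step 2 (weak cluster point in $C$).} Reflexivity of $X$ and boundedness of $\{u_k\}$ give, up to a subsequence, some $u_0\in X$ with $u_k\rightharpoonup u_0$. Since $C$ is weakly closed, $u_0\in C$. Weak lower semicontinuity of $I$ yields
\[
I(u_0)\leqslant \liminf_{k\to\infty}I(u_k)=m,
\]
and since $u_0\in C$ one also has $I(u_0)\geqslant m$. Hence $I(u_0)=m$, establishing both (a) (since $I$ is real-valued on $C$, $m=I(u_0)\in\mathbb{R}$) and (b).

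\textbf{Step 3 (Euler equation at an interior minimizer).} Suppose $u_0\in\operatorname{int}(C)$ and $I$ is Gateaux differentiable at $u_0$. For each $v\in X$ there is $\delta>0$ such that $u_0+tv\in C$ for $|t|<\delta$, so $I(u_0+tv)\geqslant I(u_0)$. Dividing by $t>0$ and letting $t\to 0^+$ gives $\langle I'(u_0),v\rangle\geqslant 0$; doing the same with $t<0$ gives the reverse inequality. Hence $\langle I'(u_0),v\rangle=0$ for every $v\in X$, i.e.\ $I'(u_0)=0$ in $X^\ast$.

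\textbf{Expected obstacle.} There is essentially no deep obstacle here; the argument is the classical direct method. The only point that requires minor care is avoiding circularity in case (ii) when asserting $m>-\infty$: one cannot directly say "$I$ is bounded below on $C$", so I would either argue contrapositively as in Step 1 (an unbounded minimizing sequence contradicts weak coercivity), or first carry out Steps 1--2 treating $m$ as an element of $[-\infty,+\infty)$ and then conclude $m=I(u_0)\in\mathbb{R}$ once the minimizer is produced. Everything else is a direct application of reflexivity, weak closedness, and weak lower semicontinuity.
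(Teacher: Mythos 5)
Your proof is correct: it is the standard direct method (minimizing sequence, boundedness from either the boundedness of $C$ or weak coercivity, weak compactness via reflexivity, weak closedness to keep the limit in $C$, weak lower semicontinuity to identify it as a minimizer, and the one-sided derivative argument at an interior point), and you handle the one delicate point --- not asserting $\inf_C I > -\infty$ prematurely in case (ii) --- correctly by deducing finiteness of the infimum only after the minimizer is produced. The paper itself gives no proof of this statement; it is quoted as a known result from the cited reference, so there is nothing in the paper to compare your argument against, but your argument is exactly the expected one and is sound.
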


Recall that a functional $I : C \subset X \rightarrow \mathbb{R}$ is \textit{weakly lower semicontinuous} at $u_0 \in C$ if for every sequence $\{u_k\} \subset C$ for which $u_k \rightharpoonup u_0$ it follows that $I(u_0) \leqslant \liminf _{k\rightarrow \infty} I(u_k)$.

\begin{lemma}\label{46} Assume that $\lambda >\sigma _2$.
\begin{enumerate}[(i)]
\item  \label{47} $\Phi _\lambda :  \mathcal{C} \rightarrow \mathbb{R}$ is weakly coercive, \textit{i.e.},  $\Phi _\lambda (u)\rightarrow \infty$ as $\Vert u\Vert _{W^{1,p(\cdot)} (\Omega)} \rightarrow \infty$ on $\mathcal{C}$.
\item \label{48} The functional $\Phi _\lambda :  \mathcal{C} \rightarrow \mathbb{R} $ has a global minimum point, say $w_\Phi \in \mathcal{C}$, such that $\Phi _\lambda (w_\Phi) <0$.
\end{enumerate}
\end{lemma}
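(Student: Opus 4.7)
My plan is to prove part (\ref{47}) by first establishing a Poincar\'e-type inequality on the cone $\mathcal{C}$, and then to obtain part (\ref{48}) by applying Theorem \ref{50} after checking that $\mathcal{C}$ is weakly closed, $\Phi_\lambda$ is weakly lower semicontinuous, and that $\Phi_\lambda$ takes negative values on $\mathcal{C}$.

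The key tool is the claim that there exists $C>0$ with
$$
\|u\|_{L^{p(\cdot)}(\Omega)}\leqslant C\bigl(\|\nabla u\|_{L^{p(\cdot)}(\Omega)}+1\bigr)\quad \text{for every }u\in\mathcal{C}.
$$
I would argue by contradiction, normalizing $v_k:=u_k/\|u_k\|_{L^{p(\cdot)}}$ so that $\{v_k\}$ is bounded in $W^{1,p(\cdot)}(\Omega)$ with $\nabla v_k\to 0$ in $L^{p(\cdot)}(\Omega)$. Passing to a weakly convergent subsequence $v_k\rightharpoonup v$ in $W^{1,p(\cdot)}(\Omega)$ yields $\nabla v=0$, hence $v$ is constant. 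The compact embedding $W^{1,p(\cdot)}(\Omega)\hookrightarrow L^q(\Omega)$ from Proposition \ref{6}, together with continuity of the Nemytskii map $u\mapsto |u|^{q-2}u$ from $L^q$ into $L^{q/(q-1)}$ and $\alpha\in L^\infty$, allows me to pass to the limit in $\int_\Omega\alpha|v_k|^{q-2}v_k\,\dx=0$ to conclude $v\in\mathcal{C}$. Since $\alpha^->0$ forces $\int_\Omega\alpha\,\dx>0$, the only constant in $\mathcal{C}$ is $v=0$. Finally, the compact embedding $W^{1,p(\cdot)}(\Omega)\hookrightarrow L^{p(\cdot)}(\Omega)$ produces $v_k\to 0$ strongly in $L^{p(\cdot)}(\Omega)$, contradicting $\|v_k\|_{L^{p(\cdot)}}=1$.

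Granted the claim, part (\ref{47}) is immediate: if $\|u_k\|_{W^{1,p(\cdot)}}\to\infty$ with $u_k\in\mathcal{C}$, then $\|\nabla u_k\|_{L^{p(\cdot)}}\to\infty$, so for large $k$ one has $\int_\Omega|\nabla u_k|^p\,\dx\geqslant \|\nabla u_k\|_{L^{p(\cdot)}}^{p^-}$. Starting from
$$
\Phi_\lambda(u_k)\geqslant\frac{1}{p^+}\int_\Omega|\nabla u_k|^p\,\dx-\frac{\lambda\alpha^+}{q}\int_\Omega|u_k|^q\,\dx,
$$
and combining the continuous Sobolev embedding $W^{1,p(\cdot)}(\Omega)\hookrightarrow L^q(\Omega)$ with the Poincar\'e claim to bound $\int_\Omega|u_k|^q\,\dx\leqslant C'(\|\nabla u_k\|_{L^{p(\cdot)}}^q+1)$, I conclude
$$
\Phi_\lambda(u_k)\geqslant\frac{1}{p^+}\|\nabla u_k\|_{L^{p(\cdot)}}^{p^-}-C''\bigl(\|\nabla u_k\|_{L^{p(\cdot)}}^q+1\bigr)\to\infty,
$$
since $p^->q$.

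For part (\ref{48}) I verify the hypotheses of Theorem \ref{50}. Weak closedness of $\mathcal{C}$ follows from the same Nemytskii continuity argument used above; weak lower semicontinuity of $\Phi_\lambda$ follows from convexity of $\xi\mapsto|\xi|^{p(x)}$ and $\xi\mapsto|\xi|^q$ in the gradient integrals, combined with weak continuity of $u\mapsto\int_\Omega\alpha|u|^q\,\dx$ via the compact embedding into $L^q$. With (\ref{47}), Theorem \ref{50} produces a global minimizer $w_\Phi\in\mathcal{C}$. To see $\Phi_\lambda(w_\Phi)<0$, since $\lambda>\sigma_2$ I can pick $v\in\mathcal{C}\setminus\{0\}$ with $\int_\Omega|\nabla v|^q\,\dx<\lambda\int_\Omega\alpha|v|^q\,\dx$; using that $\mathcal{C}$ is a cone (so $tv\in\mathcal{C}$ for all $t>0$) and that, for $t\in(0,1)$,
$$
\Phi_\lambda(tv)\leqslant\frac{t^{p^-}}{p^-}\int_\Omega|\nabla v|^p\,\dx+\frac{t^q}{q}\Bigl(\int_\Omega|\nabla v|^q\,\dx-\lambda\int_\Omega\alpha|v|^q\,\dx\Bigr),
$$
together with $p^->q$, I infer $\Phi_\lambda(tv)<0$ for small $t$, whence $\Phi_\lambda(w_\Phi)\leqslant\Phi_\lambda(tv)<0$. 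The main obstacle is the Poincar\'e claim, because $\mathcal{C}$ is a nonlinear cone rather than a linear subspace for $q>2$: passing to the limit in the constraint $\int_\Omega\alpha|u|^{q-2}u\,\dx=0$ along a weakly convergent normalized sequence, and then leveraging a second compact embedding to contradict $\|v_k\|_{L^{p(\cdot)}}=1$, are the delicate steps.
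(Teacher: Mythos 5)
Your proof is correct, but for part (\ref{47}) it takes a genuinely different route from the paper. The paper never proves a Poincar\'e inequality on $\mathcal{C}$; instead it starts from the spectral bound \eqref{45}, namely $\sigma_2\int_\Omega\alpha|u|^q\,\dx\leqslant\int_\Omega|\nabla u|^q\,\dx$ on $\mathcal{C}$, combines it with the reverse H\"older-type estimate $\int_\Omega|\nabla u|^q\,\dx\leqslant C_1\max\{(\int_\Omega|\nabla u|^p\,\dx)^{q/p^-},(\int_\Omega|\nabla u|^p\,\dx)^{q/p^+}\}$, and with the known equivalence of $[[u]]=\Vert\nabla u\Vert_{L^{p(\cdot)}(\Omega)}+\Vert u\Vert_{L^q(\Omega)}$ with the $W^{1,p(\cdot)}$-norm, to conclude that $\int_\Omega|\nabla u|^p\,\dx$ must blow up and dominates the negative term because $q/p^-<1$. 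You instead establish the control of $\Vert u\Vert_{L^{p(\cdot)}(\Omega)}$ by $\Vert\nabla u\Vert_{L^{p(\cdot)}(\Omega)}$ on the cone directly, via a normalization--compactness--contradiction argument (weak limit has zero gradient, is constant, lies in $\mathcal{C}$, hence vanishes since $\int_\Omega\alpha\,\dx>0$, contradicting the normalization), and then use the Sobolev embedding into $L^q$ plus $p^->q$. Your route is more self-contained -- it does not use $\sigma_2>0$ (hence not the cited result of \cite{leszek}) for coercivity, only for the negativity of the minimum -- at the cost of an extra compactness lemma; the paper's route is shorter once \eqref{45} is granted, but leans on the equivalent norm $[[\cdot]]$, which encodes essentially the same Poincar\'e-type information. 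For part (\ref{48}) your argument (verifying weak closedness of $\mathcal{C}$ and weak lower semicontinuity of $\Phi_\lambda$ before invoking Theorem \ref{50}, then scaling a test element $tv$ with $t\to0^+$ and using $p^->q$ to get a negative value) coincides with the paper's, and is in fact slightly more careful, since the paper asserts the hypotheses of Theorem \ref{50} without checking them.
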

\begin{proof}
\eqref{47} Let $u\in  \mathcal{C}$. By \eqref{45} and H\"older's inequality, we have
\begin{equation} \label{49}
\begin{split}
& \sigma _2 \int_{\Omega} \alpha \vert u\vert ^q \dx  \leqslant \int_{\Omega} \vert \nabla u\vert ^q \dx  \leqslant  C_1 (\Omega  , p , q)  \Vert |\nabla u|^q \Vert _{L^{\frac{p}{q}(\cdot)} (\Omega)}\\
& \leqslant C_1 \max \left\{ \left( \int_{\Omega}  \vert \nabla u\vert ^p \dx \right)^{\frac{q}{p^-}}, \left( \int_{\Omega}  \vert \nabla  u\vert ^p \dx \right)^{\frac{q}{p^+}}\right\}.
\end{split}
\end{equation}
Since $[[u]]:=\Vert \nabla u \Vert _{L^{p(\cdot)} (\Omega)} + \Vert  u \Vert _{L^q (\Omega)} $ is equivalent to $\Vert  u \Vert _{W^{ 1 , p(\cdot)} (\Omega)} $, from \eqref{49},
$$
C_3 \Vert  u \Vert _{W^{ 1 , p(\cdot)} (\Omega)}    \leqslant [[u]] \leqslant C_2 (\alpha , \sigma _2  , \Omega , p , q)\max \left\{ \left( \int_{\Omega}  \vert \nabla u\vert ^p \dx \right)^{\frac{1}{p^-}}, \left( \int_{\Omega}  \vert \nabla  u\vert ^p \dx \right)^{\frac{1}{p^+}}\right\}.
$$
where $C_3$ is a suitable positive constant. Then, if $\Vert  u \Vert _{W^{ 1 , p(\cdot)} (\Omega)}  > C_2 /C_3 $,
$$
\Phi _\lambda (u) \geqslant  \left( \int_{\Omega}  \vert \nabla u\vert ^p \dx \right)^{\frac{q}{p^-}} \left[ \frac{1}{p^+}\left( \int_{\Omega}  \vert \nabla u\vert ^p \dx \right)^{1-\frac{q}{p^-}}  - \frac{C_1}{\sigma _2 } \right] + \int_{\Omega} \frac{1}{q}  \vert \nabla u\vert ^q \dx  .
$$
Hence, the conclusion follows.\\


\eqref{48} The assumptions of Theorem  \ref{50} are satisfied. Thus, there is a global minimum point of $\Phi _\lambda : \mathcal{C} \rightarrow \mathbb{R}$, say  $w_\Phi \in \mathcal{C}$. On the other hand, since $\lambda >\sigma _2 $, there exists $w\in \mathcal{C} \backslash \{0\}$ such that
$$
\int _\Omega  \vert \nabla w\vert ^q  \dx  - \lambda \int _\Omega  \alpha \vert  w\vert ^q \dx <0. 
$$
Then, for small enough  $t>0$, 
$$
\Phi _\lambda (w_\Phi) \leqslant \Phi _\lambda (t w) \leqslant \frac{t^q}{q}  \left( \int _\Omega \frac{q}{p}t^{p-q}\vert \nabla w \vert ^p  \dx + \int _\Omega \vert \nabla w \vert ^q  \dx  - \lambda\int _\Omega  \alpha \vert  w \vert ^q \dx\right)<0.
$$
In particular, this shows that $w_\Phi \neq 0$.

\end{proof}

\noindent \textbf{Proof of Theorem \ref{53} \eqref{43}.} Following the Step 2 of the proof of Theorem \ref{28} we have  that any  $\lambda \in (0, \sigma _2]$ is not an eigenvalue of \eqref{1}. 

Now, assume that $\lambda >\sigma _2$. Let $w_\Phi$ be given by Lemma \ref{46} \eqref{48}. Thus, $w_\Phi$ is a solution of the minimization problem $\min _{u\in W^{1,p(\cdot)} (\Omega)} \Phi _\lambda (u)$, under the restriction
$$
G(u):= \int _{\Omega} \alpha \vert u \vert ^{q-2} u \dx =0.
$$
From Theorem \ref{24}, with $X=W^{1,p(\cdot)} (\Omega)$, $Y=\mathbb{R}$, $D=W^{1,p(\cdot)} (\Omega)$, for some $a\in \mathbb{R}$ we have
$$
\left[\int _\Omega \left(\vert \nabla w_\Phi \vert ^{p-2} + \vert \nabla w_\Phi \vert ^{q-2} \right) \nabla w_\Phi \nabla v \dx  - \lambda \int _\Omega \alpha \vert w_\Phi\vert ^{q-2} w_\Phi v \dx \right]+ a(q-1)\int _\Omega \alpha \vert w_\Phi\vert ^{q-2} v \dx =0,
$$
for all $v\in W^{1,p(\cdot)} (\Omega)$. Testing with $v=1$ above, we deduce
$$
a(q-1)\int _\Omega \alpha \vert w_\Phi\vert ^{q-2} v \dx,
$$
which yields $a=0$. Therefore, $\lambda$ is an eigenvalue of problem \eqref{1}.

\begin{flushright}
$\square$
\end{flushright}


\bibliographystyle{plain}
\bibliography{ref}

\begin{flushright}
\today
\end{flushright}
\end{document}